\begin{document}

\theoremstyle{plain}
\newtheorem{thm}{Theorem}[section]
\newtheorem{theorem}[thm]{Theorem}
\newtheorem{main theorem}[thm]{Main Theorem}
\newtheorem{lemma}[thm]{Lemma}
\newtheorem{corollary}[thm]{Corollary}
\newtheorem{proposition}[thm]{Proposition}

\theoremstyle{definition}
\newtheorem{notation}[thm]{Notation}
\newtheorem{claim}[thm]{Claim}
\newtheorem{remark}[thm]{Remark}
\newtheorem{remarks}[thm]{Remarks}
\newtheorem{problem}[thm]{Problem}
\newtheorem{conjecture}[thm]{Conjecture}
\newtheorem{definition}[thm]{Definition}
\newtheorem{example}[thm]{Example}

\newcommand{\Max}{{\rm Max \ }}
\newcommand{\sA}{{\mathcal A}}
\newcommand{\sB}{{\mathcal B}}
\newcommand{\sC}{{\mathcal C}}
\newcommand{\sD}{{\mathcal D}}
\newcommand{\sE}{{\mathcal E}}
\newcommand{\sF}{{\mathcal F}}
\newcommand{\sG}{{\mathcal G}}
\newcommand{\sH}{{\mathcal H}}
\newcommand{\sI}{{\mathcal I}}
\newcommand{\sJ}{{\mathcal J}}
\newcommand{\sK}{{\mathcal K}}
\newcommand{\sL}{{\mathcal L}}
\newcommand{\sM}{{\mathcal M}}
\newcommand{\sN}{{\mathcal N}}
\newcommand{\sO}{{\mathcal O}}
\newcommand{\sP}{{\mathcal P}}
\newcommand{\sQ}{{\mathcal Q}}
\newcommand{\sR}{{\mathcal R}}
\newcommand{\sS}{{\mathcal S}}
\newcommand{\sT}{{\mathcal T}}
\newcommand{\sU}{{\mathcal U}}
\newcommand{\sV}{{\mathcal V}}
\newcommand{\sW}{{\mathcal W}}
\newcommand{\sX}{{\mathcal X}}
\newcommand{\sY}{{\mathcal Y}}
\newcommand{\sZ}{{\mathcal Z}}
\newcommand{\A}{{\mathbb A}}
\newcommand{\B}{{\mathbb B}}
\newcommand{\C}{{\mathbb C}}
\newcommand{\D}{{\mathbb D}}
\newcommand{\E}{{\mathbb E}}
\newcommand{\F}{{\mathbb F}}
\newcommand{\G}{{\mathbb G}}
\newcommand{\HH}{{\mathbb H}}
\newcommand{\I}{{\mathbb I}}
\newcommand{\J}{{\mathbb J}}
\newcommand{\M}{{\mathbb M}}
\newcommand{\N}{{\mathbb N}}
\renewcommand{\P}{{\mathbb P}}
\newcommand{\Q}{{\mathbb Q}}
\newcommand{\re}{{\mathbb R}}
\newcommand{\R}{{\mathbb R}}
\newcommand{\T}{{\mathbb T}}
\newcommand{\U}{{\mathbb U}}
\newcommand{\V}{{\mathbb V}}
\newcommand{\W}{{\mathbb W}}
\newcommand{\X}{{\mathbb X}}
\newcommand{\Y}{{\mathbb Y}}
\newcommand{\Z}{{\mathbb Z}}
\newcommand{\la}{{\lambda}}
\newcommand{\al}{{\alpha}}
\newcommand{\be}{{\beta}}

\newcommand{\ve}{{\varepsilon}}
\newcommand{\vr}{{\varphi}}

\def\ga{{\gamma}}
\def\vr{{\varphi}}
\def\la{{\lambda}}
\def\al{{\alpha}}
\def\be{{\beta}}
\def\ve{{\varepsilon}}
\def\lv{\left\vert}
\def\rv{\right\vert}

\def\sen{\operatorname{{sen}}}
\def\tr{\operatorname{{tr}}}

\def\re{{\Bbb{R}}}
\def\bc{{\mathbb C }}
\def\fT{{\frak T}}
\def\nb{{\mathbb N}}
\def\bz{{\mathbb Z}}

\def\pc{{\mathcal P}}

\centerline{\huge\bf Complexity and fractal dimensions for}

\medskip

\centerline{\huge\bf infinite sequences with positive entropy}

\vskip .3in

\centerline{\sc Christian Mauduit}

\centerline{\sl Universit\'e d'Aix-Marseille et Institut Universitaire de France}

\centerline{\sl Institut de Math\'ematiques de Marseille, UMR 7373 CNRS,}

\centerline{\sl 163, avenue de Luminy, 13288 Marseille Cedex 9, France}


\centerline{\sc Carlos Gustavo Moreira}

\centerline{\sl Instituto de Matem\'atica Pura e Aplicada}

\centerline{\sl Estrada Dona Castorina 110}

\centerline{\sl 22460-320 Rio de Janeiro, RJ, Brasil}

\vskip .3in

{\bf Abstract:}
The complexity function of an infinite word $w$ on a finite alphabet $A$ is the sequence counting, for each non-negative $n$, the number of words of length $n$ on the alphabet $A$ that are factors of the infinite word $w$.
The goal of this work is to estimate the number of words of length $n$ on the alphabet $A$ that are factors of an infinite word $w$ with a complexity function bounded by a given function $f$ with exponential growth and to
describe the combinatorial structure of such sets of infinite words. We introduce a real parameter, the {\it word entropy} $E_W(f)$ associated to a given function $f$
 and we determine the fractal dimensions of sets of infinite sequences with complexity function bounded by $f$ in terms of its word entropy.
We present a combinatorial proof of the fact that $E_W(f)$ is equal to the topological entropy of the subshift of infinite words whose complexity is bounded by $f$ 
and we give several examples showing that even under strong conditions on $f$, the word entropy $E_W(f)$ can be strictly smaller than
the  limiting lower exponential growth rate of $f$.

\vskip .1in

2010 Mathematics Subject Classification:  68R15, 11K55, 37B10, 37B40, 28A78, 28D20.

Keywords: combinatorics on words, symbolic dynamics, fractal dimensions, topological entropy.

This work was supported by CNPq, FAPERJ and the Agence Nationale de la Recherche project ANR-14- CE34-0009 MUDERA.

\vskip .3in

\section{Notations}

We denote
by $q$ a fixed integer greater or equal to $2$, by $A$ the finite
alphabet $A=\{0,1,\dots,q-1\}$, by $A^*=\bigcup\limits_{n\ge0} A^n$
the set of finite words on the alphabet $A$ and by $A^{\N}$ the set
of infinite words (or infinite sequences of letters) on the alphabet
$A$.
If $v\in A^n, n \in \N$ we denote $|v| = n$ the length of the word $v$ and if $S$ is a finite set, we denote by $|S|$ the number of elements of $S$.

If $w\in A^{\N}$ we denote by $L(w)$ the set of finite factors of $w$:
$$
L(w)=\{v\in A^*,\,\, \exists \, (v',v'')\in A^*\times A^{\N}, \, w=v'v v''\}
$$
and, for any non-negative integer $n$, we write $L_n(w)=L(w)\cap A^n$.
For any $v \in L(w)$, we denote by $d^+(v)$ the number of different right extensions of $v$ in $w$: $$d^+(v) = |\{a \in A, va \in L(w) \}|$$ and we say that $v$ is a special factor
of $w$ if $d^+(v) \ge 2$ (see [CN10] for a detailed study of these notions).
If  $x$ is a real number, we denote
$
\lfloor x\rfloor = \max\{n\in\Z, n\le x\},
\lceil x\rceil=\min\{n\in \Z, x\le n\}
$
and
$\{ x \} = x - \lfloor x\rfloor$.

We will use at several stage the following classical lemma concerning sub-additive sequences due to Fekete [Fek23]:

\begin{lemma}\label{lemFekete}
If $(a_n)_{n\ge1}$ is a sequence of real numbers such that $a_{n+n'} \le a_n + a_{n' }$ for any positive integers $n$ and $n'$, then the sequence
$\left( \frac {a_n} n\right)_{n\ge1}$ converges to $\inf_{n\ge 1}\frac {a_n} n$.
\end{lemma}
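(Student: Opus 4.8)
The plan is to set $L := \inf_{n\ge 1}\frac{a_n}{n}\in[-\infty,+\infty)$ and show that $\frac{a_n}{n}\to L$ by establishing $\limsup_{n\to\infty}\frac{a_n}{n}\le L$ and $\liminf_{n\to\infty}\frac{a_n}{n}\ge L$ separately. The second inequality is immediate, since $\frac{a_n}{n}\ge L$ for every $n\ge1$ by definition of the infimum, so the whole content lies in the first inequality.

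For the upper bound, I would fix an arbitrary $m\ge1$ and, for each $n>m$, perform Euclidean division in the form $n=km+r$ with $k\ge1$ and $1\le r\le m$ (taking $k=\lfloor (n-1)/m\rfloor$ so that the remainder lands in $\{1,\dots,m\}$ rather than $\{0,\dots,m-1\}$, which avoids referring to the undefined term $a_0$). Iterating the subadditivity hypothesis $a_{n+n'}\le a_n+a_{n'}$ gives $a_n\le k\,a_m + a_r$, hence
$$\frac{a_n}{n}\le \frac{k m}{n}\cdot\frac{a_m}{m}+\frac{a_r}{n}.$$
As $n\to\infty$ with $m$ held fixed, $\frac{km}{n}\to1$ and, because $r$ ranges over the finite set $\{1,\dots,m\}$, the quantity $a_r$ stays bounded, so $\frac{a_r}{n}\to0$. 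Therefore $\limsup_{n\to\infty}\frac{a_n}{n}\le \frac{a_m}{m}$. Since $m\ge1$ was arbitrary, taking the infimum over $m$ yields $\limsup_{n\to\infty}\frac{a_n}{n}\le L$, which combined with the trivial lower bound proves $\lim_{n\to\infty}\frac{a_n}{n}=L$.

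One should check that this argument is insensitive to whether $L$ is finite or equal to $-\infty$: if $L=-\infty$, then for every real $M$ one can choose $m$ with $\frac{a_m}{m}<-M$, and the same estimate gives $\limsup_{n\to\infty}\frac{a_n}{n}\le -M$, forcing the limit to be $-\infty$. I do not expect any serious obstacle here; the only point requiring a little care is the bookkeeping of the remainder term (choosing the range of $r$ so as not to invoke $a_0$, and observing that $\frac{km}{n}\to1$ rather than carrying the cruder bound $\frac{k}{n}\le\frac1m$), and the verification that the splitting into $\limsup$ and $\liminf$ also covers the degenerate case $L=-\infty$.
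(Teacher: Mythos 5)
Your proof is correct: it is the standard argument for Fekete's subadditive lemma (fix $m$, write $n=km+r$ with the remainder forced into $\{1,\dots,m\}$ to avoid $a_0$, iterate subadditivity, let $n\to\infty$, then take the infimum over $m$), and your handling of the case $L=-\infty$ is sound. Note that the paper itself gives no proof of this statement --- it is quoted as a classical result with a reference to Fekete [Fek23] --- so there is no in-paper argument to compare against; your write-up simply supplies the standard missing details.
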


\begin{definition}\label{def1.2}
  The { \it complexity function} of $w\in A^{\N}$ is defined for any non-negative integer
  $n$ by $p_w(n)=|L_n(w)|$.

\end{definition}

\begin{example} \label{example Ch}
If $C_{\{0,1\}} = 01101110010111011110001001101010111100110111101111\dots$ is the Champernowne inifinite word on the alphabet $\{0,1\}$
obtained by concatenating the representation in base $2$ of the consecutive non-negative integers, we have $p_{C_{\{0,1\}}}(n) = 2^n$ for any non-negative integer
  $n$ (see [Ch33] and [MS98] for stronger results concerning statistical properties of Champernowne words)).
\end{example}
 
\noindent
For any $w\in A^{\N}$ and for any $(n,n')\in \N^2$ we have
  $L_{n+n'}(w)\subset L_n(w) L_{n'}(w)$ so that 
  \begin{equation} \label {p_w}
  p_w(n+n')\le p_w(n) p_w(n')
  \end{equation}
and it follows then from 
Lemmas \ref{lemFekete}  applied to $a_n = \log p_w(n)$
that for any
$w\in A^{\N}$, the sequence $\left( \frac 1n \log p_w(n)\right)_{n\ge1}$ converges
 to $\inf_{n\ge 1} \frac 1n \log p_w(n)$.
In particular, let us mention the following useful consequence:

\begin{claim}\label{consequence}
If there exist a real number $\lambda_0$ ($1 \leq \lambda_0 \leq q$) and an integer $n_0$ such that $p_w(n_0)<\lambda_0^{n_0}$,
then $p_w(n)=O(\lambda^n)$ for some $\lambda < \lambda_0$.
\end{claim}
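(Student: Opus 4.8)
The plan is to exploit the submultiplicativity relation (\ref{p_w}) directly, essentially reproving Fekete's lemma by hand in this concrete situation. Set $\mu := p_w(n_0)^{1/n_0}$. The hypothesis $p_w(n_0) < \lambda_0^{n_0}$ says precisely that $\mu < \lambda_0$, and since $w$ is infinite we have $p_w(n_0) \ge 1$, hence $\mu \ge 1$. I will show that $p_w(n) = O(\mu^n)$, which proves the claim with $\lambda = \mu$.

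To do this, write an arbitrary $n \ge 0$ by Euclidean division as $n = k n_0 + r$ with $k \ge 0$ and $0 \le r < n_0$. Iterating (\ref{p_w}) $k$ times gives $p_w(n) \le p_w(n_0)^k\, p_w(r) = \mu^{k n_0}\, p_w(r) = \mu^{\,n-r}\, p_w(r)$. Now $\mu \ge 1$ forces $\mu^{-r} \le 1$, while $0 \le r < n_0$ gives $p_w(r) \le q^r \le q^{n_0}$; therefore $p_w(n) \le q^{n_0}\,\mu^n$ for every $n \ge 0$, which is the desired estimate with implied constant $q^{n_0}$ and $\mu < \lambda_0$.

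Alternatively — and this is the viewpoint likely to be reused later — one may phrase the same computation through the limit $\ell := \lim_{n\to\infty}\frac1n \log p_w(n) = \inf_{n\ge 1}\frac1n \log p_w(n)$ furnished by Lemma \ref{lemFekete} applied to $a_n = \log p_w(n)$: the hypothesis forces $\ell \le \frac1{n_0}\log p_w(n_0) < \log\lambda_0$, one then picks any real $\lambda$ with $e^{\ell} \le \lambda < \lambda_0$, uses the convergence $\frac1n\log p_w(n) \to \ell$ to get $p_w(n) < \lambda^n$ for all sufficiently large $n$, and absorbs the finitely many remaining values into the implied constant.

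There is essentially no serious obstacle here; the only points requiring a little care are the observation that $\mu = p_w(n_0)^{1/n_0} \ge 1$ (so that the factor $\mu^{-r}$ is harmless and $\lambda = \mu$ is an admissible choice), and the routine bookkeeping of the $O$-constant over the bounded range $0 \le r < n_0$.
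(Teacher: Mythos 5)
Your proposal is correct. The paper itself gives no separate proof: the claim is stated as an immediate consequence of Lemma \ref{lemFekete} applied to $a_n=\log p_w(n)$, i.e.\ exactly your second argument --- the limit of $\frac1n\log p_w(n)$ equals the infimum, which is $\le \frac1{n_0}\log p_w(n_0)<\log\lambda_0$, and one takes any $\lambda$ strictly between $e^{\ell}$ and $\lambda_0$. Your first argument, via Euclidean division $n=kn_0+r$ and the bound $p_w(n)\le p_w(n_0)^k p_w(r)\le q^{n_0}\mu^n$ with $\mu=p_w(n_0)^{1/n_0}$, is a self-contained quantitative sharpening: it avoids invoking Fekete, produces the explicit admissible value $\lambda=\mu$ and the explicit implied constant $q^{n_0}$, and correctly handles the only delicate points (that $\mu\ge 1$ so the factor $\mu^{-r}$ can be discarded, and that $p_w(r)\le q^{n_0}$ uniformly over the remainder). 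Nothing is missing; if anything, your explicit version is slightly stronger than what the paper records, since it identifies the optimal exponential rate achievable from the single datum $p_w(n_0)$.
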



We denote
$$E(w)=\lim\limits_{n\to\infty}\frac 1n\log p_w(n) = h_{top} (X(w), T)$$
the topological entropy of the symbolic dynamical system $(X(w),T)$ where
$T$ is the one-sided shift on $A^{\N}$ and $X=\overline{orb_T(w)}$ is
the closure of the orbit of $w$ under the action of $T$ in $A^{\N}$
(see for example [Fer99] or [PF02] for a detailed study of the notions of complexity function and topological entropy).

\section{Presentation of the results}







Our work concerns the study of infinite sequences $w$ the complexity
function of which is bounded by a given function $f$ from $\N$ to $\R^{+}$.
More precisely, if $f$ is such a function, we consider the set
$$W(f)=\{w\in A^{\N}, p_w(n)\le f(n), \forall n \in \N\}$$
and we denote
$$\sL_n(f)=\bigcup\limits_{w\in W(f)} L_n(w).$$ 
For any $(n,n')\in \N^2$ we have
$\sL_{n+n'}(f)\subset \sL_n(f) \sL_{n'}(f)$ so that we can deduce from Lemma \ref{lemFekete} applied to $a_n = \log |\sL_n(f)|$ that the sequence
$\left( \frac 1n \log |\sL_n(f)| \right)_{n\ge1}$ converges
 to \break $\inf_{n\ge 1} \frac 1n \log |\sL_n(f)|$,
 which is the topological entropy of the subshift $(W(f), T)$ :
 $$h_{top} (W(f), T) = \lim_{n\to+\infty}\frac 1n\log |\sL_n(f)| = \inf_{n\ge 1} \frac 1n \log |\sL_n(f)|.$$
We denote by $E_0(f)$ the limiting lower exponential growth rate of $f$
$$E_0(f)=\lim\limits_{n\to\infty} \inf \frac 1n \log f(n).$$
Our previous papers [MM10] and [MM12] concern the case $E_0(f)=0$ and in this paper we will consider the case of positive entropy, for which very few results are known since the
work of Grillenberger [Gri73].

We define in this work the notions of w-entropy (or word-entropy) and w-complexity (or word complexity) of $f$ as follow :

\begin{definition}\label{defg}
If $f$ is a function from $\N$ to $\R^{+}$, the
{\it w-entropy} (or  {\it word entropy}) of $f$ is the quantity
$$E_W (f) =\sup_{\substack{w \in W(f)}} E(w)$$
and  {\it w-complexity} (or  {\it word complexity}) of $f$ the function $P_f$ defined, for any $n\in\nb$ by
$$P_f(n)=\max\{p_w(n), w\in W(f)\}.$$
\end{definition}

\begin{remark}
It will follow from Theorem \ref {prop3.0} that $E_W(f)$ and $E_0(P_f)$ are both equal to the topological entropy of the subshift
$(W(f), T)$.
\end{remark}

In Section \ref{4.1} we prove an useful general lemma on the positivity of the Hausdorff measure corresponding to the Hausdorff dimension of an invariant compact set 
by an expanding dynamical system on an interval and in
Section \ref{firsttheorem} we give a combinatorial proof of the following :

\begin{theorem}\label{prop3.0}
For any function $f$ from $\N$ to $\R^{+}$, we have
$$E_W(f) = \lim_{n\to+\infty}\frac 1n\log (|\sL_n(f)|) = h_{top} (W(f), T).$$
\end{theorem}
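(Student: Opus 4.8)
The plan is to prove the two nontrivial equalities separately, since the second equality $\lim_{n\to\infty}\frac1n\log|\sL_n(f)| = h_{top}(W(f),T)$ is already established in the discussion preceding the statement (it is just Fekete applied to $a_n=\log|\sL_n(f)|$, together with the fact that $W(f)$ is a subshift). So the real content is $E_W(f) = \lim_{n\to\infty}\frac1n\log|\sL_n(f)|$, and I would split this into the easy inequality $E_W(f)\le \lim_n\frac1n\log|\sL_n(f)|$ and the hard inequality $E_W(f)\ge \lim_n\frac1n\log|\sL_n(f)|$.

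The easy direction is immediate: for any $w\in W(f)$ and any $n$, $L_n(w)\subset\sL_n(f)$, so $p_w(n)\le|\sL_n(f)|$, whence $E(w)=\lim_n\frac1n\log p_w(n)\le\lim_n\frac1n\log|\sL_n(f)|$; taking the supremum over $w\in W(f)$ gives $E_W(f)\le\lim_n\frac1n\log|\sL_n(f)|$.

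For the hard inequality I would argue as follows. Write $h=\lim_n\frac1n\log|\sL_n(f)|=\inf_n\frac1n\log|\sL_n(f)|$. I want to construct a single infinite word $w^\ast\in W(f)$ with $E(w^\ast)\ge h$ (or, for each $\varepsilon>0$, a word $w_\varepsilon\in W(f)$ with $E(w_\varepsilon)\ge h-\varepsilon$). The natural idea is that $W(f)$, being a subshift with topological entropy $h$, contains words whose own complexity realizes the entropy; but the constraint $p_w(n)\le f(n)$ for \emph{all} $n$ must be respected simultaneously, which is the crux. The strategy I would pursue is a concatenation/blocking construction: since $|\sL_N(f)|$ is roughly $e^{hN}$, one can pick, for a suitable scale $N$, a large subset $\sG_N\subset\sL_N(f)$ of words of length $N$ that all appear in elements of $W(f)$, and then build $w^\ast$ by concatenating blocks from $\sG_N$ (possibly with short separating markers) in such a way that (i) every length-$N$ block pattern from $\sG_N$ eventually occurs, forcing $p_{w^\ast}(N)$ to be large, and hence $p_{w^\ast}(kN)\ge|\sG_N|^{k}$-ish along a subsequence, giving $E(w^\ast)\ge\frac1N\log|\sG_N|\gtrsim h-\varepsilon$; while (ii) the complexity of $w^\ast$ stays below $f$ at \emph{every} length. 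Point (ii) is where one uses that each block individually is a factor of some $w\in W(f)$, together with the subadditivity $f(n+n')$-type control coming from $p_w(n+n')\le p_w(n)p_w(n')$ and the definition of $E_0$, to keep the count of factors of $w^\ast$ of each length $n$ from exceeding $f(n)$; one likely needs $f$ to be (or to be replaced by) an essentially multiplicative/nondecreasing majorant, and to invoke Claim \ref{consequence} to handle the borderline case where $p_{w^\ast}(n)$ threatens to touch $f(n)$.

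The main obstacle, as I see it, is exactly reconciling (i) and (ii): making $w^\ast$ complex enough to have entropy close to $h$ while never violating the pointwise bound $p_{w^\ast}(n)\le f(n)$ at small and intermediate lengths $n$, not just asymptotically. The combinatorial heart will be a careful counting argument showing that if one interleaves the chosen $N$-blocks slowly enough (e.g., introducing the $j$-th new block only after position roughly $g(j)$ for a sufficiently fast $g$), then for each fixed $n$ the word $w^\ast$ has only ``slightly more'' length-$n$ factors than a single element of $W(f)$ would have — few enough to stay under $f(n)$ — yet the full richness of $\sG_N$ is realized in the limit, so $E(w^\ast)$ picks up the entropy. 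I would also need the elementary observation, from the definition of $W(f)$ and Fekete, that $h\le E_0(f)$, to know the target is consistent. Once $w^\ast$ is built and both properties are verified, $E_W(f)\ge E(w^\ast)\ge h$, which closes the circle of inequalities and proves the theorem.
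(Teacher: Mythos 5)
Your treatment of the easy inequality, and your observation that the identity $\lim_{n\to\infty}\frac1n\log|\sL_n(f)|=h_{top}(W(f),T)$ is already settled by Fekete's lemma before the theorem is stated, are both correct and agree with the paper. The problem is the hard inequality $E_W(f)\ge h:=\lim_{n\to\infty}\frac1n\log|\sL_n(f)|$, where your plan stops exactly at the point where the difficulty lies, and where the construction you outline cannot work in the form stated. The blocks in your set $\sG_N\subset\sL_n(f)$ are factors of \emph{different} words of $W(f)$, and a word $w^\ast$ containing all of them as factors need not lie in $W(f)$, no matter how slowly the blocks are introduced: the count $p_{w^\ast}(n)$ records every length-$n$ factor that ever appears. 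Already for the paper's example $f(n)=\lceil 3^{n/2}\rceil$ one has $f(2)=3$ while $|\sL_2(f)|=4$ (a word of full complexity avoiding the factor $11$ and its image under exchanging $0$ and $1$ both belong to $W(f)$, and their length-$2$ factors exhaust $A^2$), so any $w^\ast$ realizing all of $\sL_2(f)$ has $p_{w^\ast}(2)=4>f(2)$. Hence your requirement (i) is incompatible with (ii) unless $\sG_N$ is drawn from the language of a \emph{single} word, in which case $|\sG_N|\le P_f(N)$ and you are left needing both (a) that $E_0(P_f)\ge h$, and (b) that a single $w\in W(f)$ satisfies $p_w(n)\ge \exp(E_0(P_f)n)$ for \emph{all} $n$ simultaneously (the word achieving $P_f(N)$ a priori depends on $N$). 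Neither is addressed by your sketch.

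The paper settles both points non-constructively. For (a) it observes that $W(f)=W(P_f)$, so the upper bound of Claim \ref{prop2.1} applies with $P_f$ in place of $f$ and gives $\lim_{n\to\infty}\frac1n\log|\sL_n(f)|=\lim_{n\to\infty}\frac1n\log|\sL_n(P_f)|\le E_0(P_f)$. For (b), which is Lemma \ref{lem}, it argues by contradiction with Hausdorff measures: if no such $w$ existed, then by Claim \ref{consequence} every $w\in W(f)$ would satisfy $E(w)<E_0(P_f)$, so $C(f)$ would be covered by countably many sets $C(g_m)$ of upper box dimension strictly below $E_0(P_f)/\log q$; the Hausdorff measure of $C(f)$ in dimension $E_0(P_f)/\log q$ would then vanish, contradicting Lemma \ref{lemdim}, which guarantees that a $\psi$-invariant compact subset of $[0,1]$ has positive (or infinite) Hausdorff measure in its upper box dimension. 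If you insist on a constructive route you must supply a quantitative substitute for Lemma \ref{lem} (compare Lemma \ref{effective} later in the paper, which is proved by the same dimension-counting mechanism); as written, your proposal does not prove the theorem.
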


\begin{remark} \label {theorem}
As mentioned in the introduction of Section \ref {firsttheorem}, Theorem \ref{prop3.0} can be obtained as a consequence of the variational principle.
But we will present a direct proof of Theorem \ref {prop3.0}.
This proof, which avoids the use of the variational principle, introduces useful quantitative tools that can be used for other applications.

\end{remark}

To each $x \in  [0,1]$ we associate the infinite word
\begin{equation} \label {w}
\bold{w}(x) = {w_0}{w_1}\cdots{w_i}\cdots
\end{equation}
on the alphabet $A$,
where $x = \sum \limits_{i \ge 0} \frac{w_i}{q^{i+ 1}}$ is the
representation in base $q$ of the real number $x$ (when $x$ is a $q$-adic
rational number, we choose for $x$ the infinite word ending with $0^\infty$).
We show in Section \ref{application} that the w-entropy of $f$ allows to compute exactly the fractal dimensions of the set
\begin{equation} \label {C(f)}
C(f)=\{x = \sum \limits_{i \ge 0} \frac{w_i}{q^{i+ 1}} \in [0,1] , \bold{w}(x)  = {w_0}{w_1}\cdots{w_i}\cdots \in W(f)\}
\end {equation}
of real numbers $x \in [0,1]$ the $q-$adic expansion of which has a
complexity function bounded by $f$ (Theorem \ref {theoremC}).
The reader will find in [Fal90, Chapters 2 and 3] basic definitions concerning fractal dimensions and in [Bug04, Chapters V and VI] 
other examples of application of fractal dimensions to number theoretical problems.

In Section \ref {?} we show that for any function $f$ from $\N$ to $\R^{+}$ such that $f(n) \ge n+1$ for any non-negative integer $n$ and $E_0(f)>0$
we have $E_W(f) > 0$.
But we will see that it is easy to give examples of function $f$ for which the ratio $E_W(f)/E_0(f)$ (which is always
smaller or equal to $1$) can be made arbitrarily small
and we will show in Section \ref {relations} that, even under stronger conditions concerning the function $f$, it might happen that $E_W(f) < E_0(f)$.
On the other hand, if $f$ is indeed a complexity function (i.e. $f=p_w$ for
some $ w \in A^\N$), then we clearly have $E_W (f)=E_0 (f)$.
But the following problem seems difficult to handle:

\begin{problem}
Find a set of simple conditions on $f$ which hold for complexity functions and implies $E_W(f)=E_0(f)$. 
\end{problem}

 In section 8 we show that,
for any $h \in (0,+\infty)$, there is an infinite word $w$ 
on a finite alphabet with at least $\lceil e^h \rceil$ letters such that
the complexity function $p_w$ has exact exponential order $h$ (in the sense that there exist positive constants $c_1$ and $c_2$ such that, 
for any $n \in \N$,
we have $c_1 e^{hn} \le p_w(n) \le c_2 e^{hn}$).
Theorem \ref {6.10} and Corollary \ref {corollary} can be compared with the result obtained by Grillenberger in [Gri73] showing
that, for any $0 < h < \log q$, there exists a strictly ergodic $w\in A^\N$ such that $E(w) = \lim_{n\to+\infty}\frac 1n\log p_w(n) = h$.

In Section 9 we discuss an open problem concerning the w-entropy of the minimum of two functions.

\begin{remark}
It might be difficult to compute the w-entropy of a given function and it would be interesting to provide an algorithm allowing to estimate with 
an arbitrary precision
the w-entropy of any given function satisfying the conditions $(\mathcal C^*)$ (see Definition \ref {defC^*}).
\end{remark}





\section{A lemma concerning fractal dimensions}  \label{4.1}
 
\begin{definition}
We call {\it block interval} any interval of a decomposition of $[0,1]$ as the union of $q^n$ intervals of size $1/q^n$, for some $n \in \N$
(block intervals correspond to cylinders, i.e., to fixing the beginning of infinite sequence).
\end{definition}

Let $\psi$ be the shift map on the interval $[0, 1]$, defined by $\psi(x):=\{qx\}=qx-\lfloor qx \rfloor$.
Then, we have the following lemma concerning the fractal dimensions of $\psi$-invariant compacts of $[0, 1]$:

\begin{lemma}\label{lemdim}
If $X \subset [0, 1]$ is a compact set such that $\psi (X) \subset X$ with upper box dimension equal to $s$, the the Hausdorff measure of dimension $s$
of $X$ is greater or equal to $q^{-s}/2$ (or infinite). In particular, the upper box and Hausdorff dimensions of $X$ coincides.
\end{lemma}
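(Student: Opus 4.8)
The plan is to exploit the self-similar structure forced by $\psi$-invariance. Fix $s$ equal to the upper box dimension of $X$, and let $N_n$ be the number of block intervals of size $q^{-n}$ that meet $X$. The key observation is a super-multiplicativity-type lower bound: since $\psi(X) \subset X$ and $\psi$ maps each block interval of size $q^{-(n+1)}$ affinely onto a block interval of size $q^{-n}$ (multiplying lengths by $q$), a block interval $I$ of size $q^{-n}$ that meets $X$ has the property that at least one of the $q$ block subintervals of size $q^{-(n+1)}$ inside $I$ meets $X$; but more usefully, running this the other way, $X \cap I$ and $\psi(X\cap I) \subset X \cap \psi(I)$ lets one compare covers at scale $q^{-(n+1)}$ inside $I$ with covers of $X \cap \psi(I)$ at scale $q^{-n}$. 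First I would make this precise to get, for every $n$ and every block interval $I$ of size $q^{-n}$ meeting $X$, a lower bound on the number of size-$q^{-(n+1)}$ block subintervals of $I$ meeting $X$ in terms of the ``branching'' of $X$, and iterate.

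Next I would set up the mass distribution (Frostman) argument. Because $s$ is the \emph{upper} box dimension, we have $\limsup_n \frac{1}{n}\log_q N_n = s$, so there is a subsequence $n_k$ along which $N_{n_k} \ge q^{(s-\varepsilon_k)n_k}$ with $\varepsilon_k \to 0$; combined with the invariance-driven self-similarity, I would bootstrap this to the statement that along a suitable sequence every block interval meeting $X$ at scale $q^{-m}$ contains many sub-block-intervals meeting $X$ at finer scales, uniformly enough to build a probability measure $\mu$ supported on $X$ satisfying $\mu(I) \le 2 q^{s} \cdot |I|^s$ for every block interval $I$. The factor $q^{-s}/2$ in the statement is exactly the reciprocal constant that will come out: one distributes mass as equally as the branching allows, losing at most a factor related to $q$ at each generation, and the $\limsup$ (rather than $\lim$) is what forces the ``or infinite'' alternative — if the box counts oscillate, the measure one builds may assign zero mass where the counts dip, and the Hausdorff measure can blow up.

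Once $\mu$ is constructed with $\mu(I) \le C |I|^s$ for all block intervals with $C = 2q^s$, the standard mass distribution principle gives $\mathcal{H}^s(X) \ge \mu(X)/C' = 1/(2q^s) = q^{-s}/2$, after passing from arbitrary sets to block intervals in covers (any interval of length $\ell$ is covered by a bounded number of block intervals of comparable size, and one absorbs that into the constant, or more carefully chooses the constant to already account for it). Since $\mathcal{H}^s(X)>0$ forces $\dim_H X \ge s$, while $\dim_H X \le \overline{\dim}_B X = s$ always, the two dimensions coincide, proving the final claim.

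The main obstacle I anticipate is the uniformity needed in the second step: upper box dimension only controls the counts $N_n$ along a $\limsup$, so to build a genuine measure (defined consistently at all scales, not just a subsequence) I must either (i) use the invariance to transfer good behavior at scale $n_k$ down to all scales $m \le n_k$ via $\psi^{n_k - m}$, which works because $\psi$ is expanding and surjective on block intervals, or (ii) accept that $\mathcal{H}^s(X)$ may be infinite when the transfer fails to stabilize. Getting the bookkeeping of the constant $q^{-s}/2$ exactly right — tracking the single factor of $q$ lost when a block interval's mass is concentrated on one child versus spread evenly — is the delicate but routine part; the conceptual crux is the scale-transfer via invariance.
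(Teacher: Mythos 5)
Your overall strategy --- build a measure $\mu$ on $X$ with $\mu(I)\le 2q^{s}|I|^{s}$ for every block interval and invoke the mass distribution principle --- would prove the lemma if such a measure were exhibited, but constructing it is precisely the hard part, and your sketch does not supply it. Two concrete problems. First, the mechanism you describe (``distributing mass as equally as the branching allows, losing at most a factor related to $q$ at each generation'') cannot work as stated: a fixed loss per generation compounds to $\mu(I)\le C^{n}|I|^{s}$ for $n$-blocks, which is useless; the whole point is a constant independent of the scale. Second, the natural candidate, a weak-$*$ limit of the uniform distributions on the $N_n$ surviving $n$-blocks, does not satisfy the required bound at the critical exponent. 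By $\psi$-invariance the counts are submultiplicative, $N_{n+m}\le N_nN_m$, so Fekete's lemma gives $N_n\ge q^{sn}$ for \emph{every} $n$ (this, incidentally, disposes of your $\limsup$-versus-$\liminf$ worry: the box dimension of such an $X$ is automatically a limit); but there is no matching upper bound $N_n\le Cq^{sn}$, and for an $m$-block $J$ invariance only yields $\mu_n(J)\le N_{n-m}/N_n$, which exceeds $Cq^{-sm}$ by an unbounded factor whenever $N_k/q^{sk}\to\infty$ subexponentially. Producing a Gibbs-type bound $\mu(I)\le C|I|^{s}$ at the exact critical exponent with a universal constant, for an arbitrary compact invariant set (not a subshift of finite type), is not routine, and nothing in your outline explains how invariance is supposed to deliver it. (Your reading of the ``or infinite'' alternative is also off: it is there simply because $\mathcal{H}^{s}(X)$ may be infinite, not because of oscillating box counts.)

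The paper sidesteps all of this with a short contradiction argument. If $\mathcal{H}^{s}(X)<q^{-s}/2$, compactness gives a finite cover by open intervals with $\sum|\tilde I_j|^{s}<q^{-s}/2$; replacing each $\tilde I_j$ by two block intervals of length at most $q|\tilde I_j|$ yields a cover of $X$ by block intervals of sizes $q^{-r_j}$ with $\sum q^{-r_js}<1$ --- this conversion is the sole source of the constant $q^{-s}/2$. Invariance then traps $X$ inside the maximal invariant set $K$ of the expanding map $\varphi|_{I_j}=\psi^{r_j}$, whose upper box and Hausdorff dimensions equal the root $\lambda$ of $\sum q^{-r_j\lambda}=1$ (the Moran--Bowen formula, cited from [PT92]), and $\sum q^{-r_js}<1$ forces $\lambda<s$, contradicting $\overline{\dim}_BX=s$. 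To salvage your route you would essentially have to reprove this dimension formula, or else fall back on the variational principle and a measure of maximal entropy, which gives $\dim_HX\ge s$ but not the quantitative positivity of $\mathcal{H}^{s}(X)$. As written, your proposal has a genuine gap at its central step.
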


\begin{proof}

If $X$ had Hausdorff measure of dimension $s$ smaller than $q^{-s}/2$, since $X$ is compact, it would be possible to cover $X$
by a finite number of open intervals $\tilde I_j (1 \le j \le m)$ with $$\sum_{j=1}^m |\tilde I_j|^{s} < q^{-s}/2.$$
Each interval $\tilde I_j$ can be covered by two block intervals with disjoint
interiors, say $I_{2j-1}$ and $I_{2j}$ with $|I_{2j-1}|, |I_{2j}|<q|\tilde I_j|$, so it is possible to cover $X$ by the block intervals with disjoint
interiors $I_j (1 \le j \le 2m)$ of sizes $1/q^{r_j} (1 \le j \le 2m)$ with $$\sum_{j=1}^{2m} (1/q^{r_j})^{s} < 2\cdot q^s\cdot q^{-s}/2=1.$$
This would imply that $X$ is contained in the maximal invariant set $K$ of the expanding map
$\varphi: \cup_{j=1}^m I_j \rightarrow [0,1]$ given by $\varphi \mid_{I_j}:=\psi^{r_j}$.
But the upper box and the Hausdorff dimensions of $K$ are both equal to $\lambda$, where $\lambda$ is the root of the equation
$$\sum_{j=1}^m (1/q^{r_j})^\lambda=1$$
(see [PT92, Chapter 4], in which upper box dimension is called limit capacity) and, since
$\sum_{j=1}^m (1/q^{r_j})^{s} < 1$, we would have
$\lambda < s$, which would contradict our hypothesis.
 
\end{proof}

The next corollary follows from the proof of Lemma \ref {lemdim}:
\begin{corollary}
For any $x \in [0, 1]$, the closure of the orbit of $x$ under the shift map $\psi$ has positive Hausdorff measure of dimension $E(\bold{w}(x) )/\log q$. 
\end{corollary}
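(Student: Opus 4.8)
If $E(\bold{w}(x)) = 0$ the statement is immediate, a nonempty set having positive Hausdorff measure in dimension $0$; so I would assume $E(\bold{w}(x)) > 0$. This forces $x$ not to be a $q$-adic rational, hence $w := \bold{w}(x)$ is the unique base-$q$ expansion of $x$ and no suffix $T^n w$ ends in $(q-1)^\infty$. The plan is to realise $\overline{orb_\psi(x)}$, up to a single point, as the image of the subshift $X(w) = \overline{orb_T(w)}$ under the base-$q$ coding map $\pi \colon A^\N \to [0,1]$, $\pi(u_0 u_1 \cdots) = \sum_{i \ge 0} u_i q^{-(i+1)}$, to express its upper box dimension in terms of the complexity function $p_w$, and then to apply Lemma \ref{lemdim}.

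The map $\pi$ is continuous, and since no $T^n w$ ends in $(q-1)^\infty$ one checks $\pi(T^n w) = \psi^n(x)$ for every $n$. As the continuous image of a compact set is compact, $\pi(X(w))$ is then a compact set containing $orb_\psi(x) = \pi(orb_T(w))$ and contained in its closure, whence $\pi(X(w)) = \overline{orb_\psi(x)} =: Y$. The one subtlety is that $Y$ itself need not be $\psi$-invariant, because $\psi$ is discontinuous at the $q$-adic points $1/q, \ldots, (q-1)/q$ and $1$: for $y \in Y$ with $y \in orb_\psi(x)$ we have $\psi(y) \in orb_\psi(x) \subset Y$, for $y$ a limit point of $orb_\psi(x)$ at which $\psi$ is continuous we get $\psi(y) \in \overline{orb_\psi(x)} = Y$, and for the remaining points, namely $1/q, \ldots, (q-1)/q, 1$, the map $\psi$ takes the value $0$. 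Hence $\psi(Y) \subset Y \cup \{0\}$, so $\tilde Y := Y \cup \{0\}$ is a compact $\psi$-invariant set, with the same upper box dimension as $Y$, to which Lemma \ref{lemdim} will apply.

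Finally I would identify $\overline{\dim}_B Y$ with $E(w)/\log q$. The dictionary is that a block interval of length $q^{-n}$ is labelled by the word $u \in A^n$ given by the first $n$ base-$q$ digits of its left endpoint, and such an interval meets $Y$ precisely when, up to the ambiguity of base-$q$ expansions at $q$-adic rationals, $u$ is a length-$n$ prefix of some element of $X(w)$; since factors of elements of the orbit closure $X(w)$ are factors of $w$ (using that $L_n(w)$ is finite, hence closed), while conversely every word of $L_n(w)$ occurs in $w$ and is a prefix of some $T^m w$, the number $N_n$ of block intervals of length $q^{-n}$ that meet $Y$ satisfies $p_w(n) \le N_n \le 3\, p_w(n)$, the factor $3$ absorbing the finitely many boundary effects at $q$-adic rationals. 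Since $\tfrac1n \log p_w(n) \to E(w)$ by \eqref{p_w} and Lemma \ref{lemFekete}, and the scales $q^{-n}$ suffice to compute the upper box dimension, this gives $\overline{\dim}_B Y = \overline{\dim}_B \tilde Y = E(w)/\log q =: s$, which is positive under our standing assumption. Lemma \ref{lemdim} applied to $\tilde Y$ yields $\mathcal{H}^s(\tilde Y) \ge q^{-s}/2$ (or $\mathcal{H}^s(\tilde Y) = \infty$); since $s > 0$, the extra point $0$ carries no $\mathcal{H}^s$-mass, so $\mathcal{H}^s\big(\overline{orb_\psi(x)}\big) = \mathcal{H}^s(Y) \ge q^{-s}/2 > 0$, which is the assertion. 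The step I expect to require the most care is the bookkeeping around $q$-adic rationals, both in the bound for $N_n$ and in the verification that $\psi(Y) \subset Y \cup \{0\}$, since $\psi$ is genuinely discontinuous there; the rest is either formal or a direct appeal to Lemma \ref{lemdim}.
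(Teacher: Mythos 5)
Your argument is correct and is essentially the paper's own: the paper derives this corollary from Lemma \ref{lemdim} applied to the orbit closure $X=\overline{orb_\psi(x)}$, whose upper box dimension is computed by counting the block intervals of size $q^{-n}$ meeting $X$ against $p_w(n)$, exactly as you do. You merely supply details the paper leaves implicit (the trivial case $E(\bold{w}(x))=0$, the harmless failure of strict $\psi$-invariance at the discontinuities $k/q$, and the bookkeeping at $q$-adic rationals), all of which are handled correctly.
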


\section{Word entropy and topological entropy} \label{firsttheorem}


Theorem \ref{prop3.0} can be obtained as a consequence of the variational principle.
To show it, we refer to the presentation given by P. Walters in [Wal82, Chapter 8].

By definition, $E_W(f)$ is the supremum of the topological entropy of all infinite words belonging to $W(f)$
(in other words, it is the supremum of the topological entropy of all the transitive components of $(W(f), T)$).
As T is an expansive map, it follows from Theorem 8.2 of [Wal82] that its entropy map is upper semicontinuous.
Then, it follows from Theorem 8.7, (v) of [Wal82] that there exists a measure of maximal entropy for T
and from Theorem 8,7, (iii) of [Wal82] that there is an ergodic such measure.
By considering a generic orbit with respect to this ergodic measure, we can get the conclusion.

But we present in this section a direct proof of Theorem \ref {prop3.0}, which
will follow 3 steps given in Sections 4.1, 4.2 and 4.3.


\subsection{Upper bounds for $|\sL_n(f)|$}

To obtain upper bounds for $|\sL_n(f)|$ we follow the strategy presented in [MM10, Section 3] which show that,
for any non-negative integers $k$ and  $N$, we have
$  |\sL_{kN}(f)| \le 
f(N)^k q^{N f(N)}.$
Then we choose the parameter $k$ in order to optimize this upper bound.
Let us fix an increasing sequence of positive integers $(N_r)_{r \in \N}$ such that
$$\lim_{r\to\infty} \log f(N_r)/N_r=E_0(f)$$ and, for $q^{q^{N_r}}\le n<q^{q^{N_{r+1}}}$, take $k=\lceil\frac n{N_r}\rceil$ in the above 
estimate  \footnote{  Clearly, we can replace in this estimate the function $q^{q^{N_r}}$
  by other unbounded increasing functions, with arbitrarily fast
  growth at infinity.}.
It follows that
$$
|\sL_n(f)|\le|\sL_{kN_r}(f)|\le f(N_r)^k q^{N_r f(N_r)}= e^{\vr(n)},
$$
where
$\vr(n)=k\log f(N_r)+ N_r f(N_r) \log q$
satisfies
$$
\lim_{n\to\infty}\frac 1n\vr(n)=\lim_{r\to\infty}\frac1{N_r}\log f(N_r)=E_0(f).
$$


Finally, we have proved the following:

\begin{claim}\label{prop2.1}
For any function $f$, we have
 $$|\sL_n(f)|\le \exp(n E_0(f)+ o(n)).$$

\end{claim}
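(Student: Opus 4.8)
The plan is to establish the key intermediate inequality $|\sL_{kN}(f)| \le f(N)^k \, q^{N f(N)}$ for all non-negative integers $k$ and $N$, and then optimize over the free parameter $k$ as indicated. For the intermediate inequality, fix $N$ and consider any word $u \in \sL_{kN}(f)$. By definition there is some $w \in W(f)$ with $u \in L_{kN}(w)$, so $u$ decomposes into $k$ consecutive blocks $u = u^{(1)} u^{(2)} \cdots u^{(k)}$ with each $u^{(i)} \in A^N$. Each $u^{(i)}$ lies in $L_N(w) \subset \sL_N(f)$, and there are at most $f(N)$ choices for each block since $|L_N(w)| = p_w(N) \le f(N)$; this already gives the factor $f(N)^k$. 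The extra factor $q^{N f(N)}$ must come from a more careful accounting that records, for each of the at most $f(N)$ length-$N$ factors, how it can be extended — following the bookkeeping of [MM10, Section 3], where one tracks which block follows which among the boundedly many admissible length-$N$ words. I would reproduce that argument: the set of length-$N$ factors of $w$ has size at most $f(N)$, and the data needed to reconstruct $u$ beyond the first block is essentially a function from positions to this set, constrained compatibly, whose count is bounded by $q^{N f(N)}$.

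Next I would carry out the optimization. Fix an increasing sequence $(N_r)$ of positive integers realizing the lower limit, i.e. $\lim_{r\to\infty} \frac{1}{N_r}\log f(N_r) = E_0(f)$. For $n$ in the range $q^{q^{N_r}} \le n < q^{q^{N_{r+1}}}$, set $k = \lceil n/N_r \rceil$, so that $kN_r \ge n$ and hence $|\sL_n(f)| \le |\sL_{kN_r}(f)| \le f(N_r)^k q^{N_r f(N_r)} = e^{\vr(n)}$ with $\vr(n) = k\log f(N_r) + N_r f(N_r)\log q$. The main point is the asymptotic $\frac{1}{n}\vr(n) \to E_0(f)$: the first term contributes $\frac{k}{n}\log f(N_r)$, and since $k/n \to 1/N_r$ (because $k = \lceil n/N_r\rceil$ and $n \ge q^{q^{N_r}} \gg N_r$), this tends to $\frac{1}{N_r}\log f(N_r)$, which is close to $E_0(f)$ for large $r$; the second term contributes $\frac{N_r f(N_r)\log q}{n}$, which is negligible because $n \ge q^{q^{N_r}}$ dwarfs $N_r f(N_r)$ (note $f(N_r) \le q^{N_r}$, so $N_r f(N_r) \le N_r q^{N_r} = o(q^{q^{N_r}})$). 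Combining these, $\frac{1}{n}\vr(n) \to E_0(f)$, which is exactly the assertion $|\sL_n(f)| \le \exp(n E_0(f) + o(n))$.

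The step I expect to be the main obstacle is the precise derivation of the factor $q^{N f(N)}$ in the bound $|\sL_{kN}(f)| \le f(N)^k q^{N f(N)}$: getting the naive factor $f(N)^{kN}$ or $q^{kN}$ is trivial, but that is useless since it grows with $k$, whereas we need a bound in which the $k$-dependence sits only in the harmless $f(N)^k$ factor. This requires the observation from [MM10] that, once the first $N$-block is chosen, the sequence of subsequent $N$-blocks is determined by how the (at most $f(N)$-many) length-$N$ words of $w$ chain together, a piece of data of bounded complexity $q^{Nf(N)}$ independent of $k$. I would take this combinatorial lemma from [MM10, Section 3] as given, state it cleanly, and then the rest is the routine asymptotic analysis above; invoking Lemma \ref{lemFekete} is not needed here since we only want the stated one-sided estimate, though it would upgrade it to a genuine limit if desired.
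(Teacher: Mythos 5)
Your proposal follows the same route as the paper's proof: the intermediate bound $|\sL_{kN}(f)|\le f(N)^k q^{Nf(N)}$ imported from [MM10, Section 3], then the choice $k=\lceil n/N_r\rceil$ on the range $q^{q^{N_r}}\le n<q^{q^{N_{r+1}}}$ along a sequence realizing the lower limit; your asymptotic analysis of $\frac 1n\vr(n)$ is correct, and the paper likewise does not reprove the inequality from [MM10]. One point in your gloss of that inequality is off, and it is precisely the point you flag as the obstacle. Saying ``there are at most $f(N)$ choices for each block since $|L_N(w)|=p_w(N)\le f(N)$'' only bounds $|L_{kN}(w)|$ for a \emph{single} $w\in W(f)$; that much is immediate from $L_{kN}(w)\subset L_N(w)^k$. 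The difficulty is that $\sL_{kN}(f)$ is a union over all $w\in W(f)$, and different $w$ may have entirely different sets $L_N(w)$. The factor $q^{Nf(N)}$ is therefore not ``the data needed to reconstruct $u$ beyond the first block'' (that data is an element of $L_N(w)^{k-1}$, already counted inside $f(N)^k$); it bounds the number of possible sets $S=L_N(w)\subset A^N$ with $|S|\le f(N)$, which is at most $(q^N)^{f(N)}=q^{Nf(N)}$, and one sums the per-set bound $f(N)^k$ over these at most $q^{Nf(N)}$ sets. With that reading the lemma is elementary and your optimization goes through verbatim. A last small caveat: your parenthetical $f(N_r)\le q^{N_r}$ is only guaranteed for large $r$ and only when $E_0(f)<\log q$; but in the remaining case $E_0(f)\ge\log q$ the claim is trivial since $|\sL_n(f)|\le q^n=e^{n\log q}$.
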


\begin{example} \label{example C}
For each $1<\theta\le q$, and $n_0 \in \N$ such that $\theta^{n_0+1} >  n_0+q-1$, we define the function $f$ by $f(1)=q$,
$f(n) = n+q-1$ for $1 \le n \le n_0$ and $f(n)=\theta^n$ for $n > n_0$. We have $E_0(f) =  \log \theta$
and
$ |\sL_n(f)|\le \theta^{n + O((\log_q n)^{\log_q \theta} \log_q\log_q n)}.$

\end{example}

\subsection{A lemma which implies that $E_W(f) = E_0(P_f)$}


If $f$ is a function from $\N$ to $\R^{+}$, let $P_f$ be the w-complexity of $f$ defined in Definition \ref {defg}.
It is easy to check that for any $(n,n')\in\nb^2$ we have $P_f(n+n')\le P_f(n)P_f(n')$ and it follows from Lemma \ref{lemFekete} applied to
$a_n = \log P_f(n)$ that the sequence
$\left( \frac 1n {\log P_f(n)} \right)_{n\ge1}$ converges to $\inf_{n\ge 1} \frac 1n {\log P_f(n)}$,
so that we have $$E_0(P_f) = \lim_{n\to\infty}\frac 1n {\log P_f(n)} = \inf_{n\ge 1}\frac 1n {\log P_f(n)}.$$

\begin{lemma}\label{lem}
For any function $f$ from $\N$ to $\R^{+}$, there exists
$w\in W(f)$ such that for any $n\in\nb$ we have $p_w(n) \ge \exp (E_0(P_f) n)$. 
\end{lemma}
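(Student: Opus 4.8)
The plan is to construct the desired word $w$ as a limit of finite words chosen from the sets $\sL_n(P_f)$, using a tree/compactness argument together with the submultiplicativity that gives $E_0(P_f) = \inf_{n\ge 1} \frac 1n \log P_f(n)$. Write $\lambda = \exp(E_0(P_f))$, so that $P_f(n) \ge \lambda^n$ for every $n$ (this is the point of taking the infimum rather than the limit). The first step is to observe that since $P_f(n) = \max\{p_w(n) : w \in W(f)\}$, for each $n$ there is a word $u^{(n)} \in \sL_n(f)$ whose number of length-$n$ factors realized inside some $w \in W(f)$ is large; more usefully, $|\sL_n(f)| \ge P_f(n) \ge \lambda^n$, and every element of $\sL_n(f)$ extends (on both sides, but we only need the right side) within some element of $W(f)$.

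The core of the argument is a pigeonhole/pruning construction. First I would build an infinite rooted tree $\sT$ whose vertices at depth $n$ are a carefully chosen subset $S_n \subseteq \sL_n(f)$, with the property that each vertex has many descendants and $|S_n| \ge \lambda^n$ (or at least $\ge \lambda^{n}/C$ for a fixed constant, which by Claim \ref{consequence} is enough). The mechanism: given a word $v$ of length $n$ in some $w \in W(f)$, all of its factors of every length $\le n$ are automatically in $\sL_n(f)$-style sets, so a single long word $v \in \sL_N(f)$ with $N$ large furnishes $\ge \lambda^N / N$ factors of some intermediate length. The delicate point is to extract a single infinite word all of whose prefixes have many factors of every length simultaneously. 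I would do this by a diagonal argument: for each $N$, pick $w_N \in W(f)$ with $p_{w_N}(N)$ close to $P_f(N) \ge \lambda^N$; pass to a subsequence so that the prefixes $w_N|_1, w_N|_2, \dots$ stabilize letter by letter, giving a limit word $w \in A^{\N}$. Since $W(f)$ is closed (it is a subshift, being defined by the closed conditions $p_w(n) \le f(n)$, which pass to limits because $L_n(w) \subseteq \liminf L_n(w_N)$), we get $w \in W(f)$. It remains to check the lower bound $p_w(n) \ge \lambda^n$ for every $n$: because factors of $w$ of length $n$ are limits of factors of the $w_N$'s, and $p_{w_N}(n) \ge \lambda^n$ once $N$ is large (as $p_{w_N}(N) \ge \lambda^N$ forces, via submultiplicativity $p_{w_N}(N) \le p_{w_N}(n)^{\lceil N/n\rceil}$, that $p_{w_N}(n) \ge \lambda^{n - o(1)}$, hence $\ge \lambda^n$ after absorbing constants and invoking Claim \ref{consequence}), the count $p_w(n)$ cannot drop below $\lambda^n$.

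The main obstacle I anticipate is the passage to the limit: naively, taking a letterwise limit of the $w_N$ only guarantees that $L_n(w) \subseteq \bigcup_N L_n(w_N)$ and gives an upper bound $p_w(n) \le \limsup p_{w_N}(n)$, which is the wrong direction. To get the lower bound one must be more careful: rather than taking prefixes of a fixed sequence $w_N$, I would instead use the tree $\sT$ directly. Define $S_n \subseteq A^n$ to be the set of words $v$ such that $v$ has at least $\lambda^{|v'|}$ factors... — more precisely, use König's lemma on the tree of words $v$ for which there exists $w \in W(f)$ with $v \in L(w)$ and $p_w(m) \ge \lambda^m$ for all $m \le |v|$; show this tree is infinite (it contains, for each $N$, the length-$N$ prefixes of $w_N$ once $p_{w_N}(N) \ge \lambda^N$, using the submultiplicative lower bound to cover all $m \le N$) and finitely branching, hence has an infinite branch $w$. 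Then $w \in W(f)$ by closedness, and for each fixed $n$, taking any prefix $v$ of $w$ of length $N > n$ lying on the branch, the word $w$ shares its first $N$ letters with some $w_N' \in W(f)$ having $p_{w_N'}(n) \ge \lambda^n$; since a word of length $N$ already contains $\min(p_{w_N'}(n), N - n + 1)$ distinct factors of length $n$... — here one needs $N$ large enough that $N - n + 1 \ge \lambda^n$, which is fine — so $p_w(n) \ge \lambda^n$. That quantitative bookkeeping, making sure the König's-lemma tree is genuinely infinite, is the step that needs real care; everything else is the routine submultiplicativity already set up before the lemma.
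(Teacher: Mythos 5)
Your overall strategy (build $w$ directly by a K\"onig's-lemma / compactness argument on finite words) is genuinely different from the paper's, which proves Lemma \ref{lem} by contradiction: assuming every $w\in W(f)$ violates the bound at some $n$, Claim \ref{consequence} places $W(f)$ inside $\bigcup_{m\ge1}W(g_m)$ with $E_0(g_m)<E_0(P_f)$, which forces the Hausdorff measure of $C(f)$ in dimension $E_0(P_f)/\log q$ to vanish and contradicts the fractal-dimension Lemma \ref{lemdim}. Unfortunately your construction has a genuine gap exactly at the step you flag as delicate, and the mechanism you propose there is false. Your tree-membership condition (``there exists $w\in W(f)$ with $v\in L(w)$ and $p_w(m)\ge\lambda^m$ for all $m\le|v|$'') is a property of the ambient infinite word $w$, not of the finite word $v$ itself, so the infinite branch $\hat w$ inherits no lower bound on its own complexity. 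The claim that a length-$N$ factor of $w'$ contains $\min(p_{w'}(n),N-n+1)$ distinct factors of length $n$ is simply wrong: the $p_{w'}(n)$ factors of $w'$ are scattered through the whole infinite word, and a particular factor of length $N$ may contain only one of them (take $v=0^N$ inside a high-complexity $w'$ that has arbitrarily long blocks of zeros). Consequently nothing prevents the branch $\hat w$ from being, say, eventually constant.

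What your approach actually needs is the statement that an infinite word with $E(w)\ge c$ possesses finite factors all of whose prefixes simultaneously contain at least $e^{cn}$ distinct sub-factors of each length $n$ in a suitable range; this is precisely the paper's Lemma \ref{effective}, and its proof is not routine bookkeeping: it counts the words of a given length having few sub-factors and then invokes the Hausdorff-measure Lemma \ref{lemdim} to rule out covering the orbit closure by the corresponding cylinders. So the missing step is essentially equivalent to the dimension-theoretic input that the paper's own proof uses. Two smaller points you would also need to repair: Claim \ref{consequence} is an asymptotic statement about one fixed infinite word and does not by itself give $p_{w_N}(n)\ge\lambda^n$; you must run the submultiplicative inequality $p_{w_N}(N)\le p_{w_N}(n)^{\lceil N/n\rceil}$ quantitatively and use the integrality of $p_{w_N}(n)$ to absorb the $\lambda^{-o(1)}$ loss. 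And since the prefixes of the branch are factors, not prefixes, of the witnessing words, the closedness argument for $\hat w\in W(f)$ requires the shift-invariance of $W(f)$ in addition to its closedness. These are fixable; the failure of the ``$\min(p_{w'}(n),N-n+1)$'' count is the real gap.
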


\begin{proof}

Let us prove Lemma \ref {lem} by contradiction and suppose that, for any $w \in W(f)$, there exists a positive integer $n$ such that $p_w(n) < \exp (E_0(P_f) n)$.
By Claim \ref {consequence}, this would imply that, for any $w \in W(f)$
$$E(w) = \lim_{n\to+\infty}\frac 1n\log p_w(n) < E_0(P_f).$$
Let us choose $\tilde m \in \N$ such that
$E(w)  < \frac {\tilde m} {\tilde m +1} E_0(P_f)$
and $$M= \sup_{n \in \N} p_w(n) \exp {(- \frac {\tilde m} {\tilde m +1} E_0(P_f) n)} < +\infty.$$
It follows that for $m = \max (\tilde m, M)$ we would have $$p_w(n) \leq m.\exp(\frac m{m+1}E_0(P_f) n),$$ so that we would have
$$W(f)\subset \cup_{m \ge 1}W(g_m),$$ where $g_m$ is defined by $g_m(n)=m.\exp(\frac m{m+1}E_0(P_f) n)$ for any $n \in \N$.
This would imply that the set $C(f)$ defined by (\ref {C(f)}) would satisfy $C(f) \subset \cup_{m \ge 1}C(g_m)$.

By Claim \ref{prop2.1}, we have
$$\lim_{n\to+\infty}\frac 1n\log (|\sL_n(g_m)|) \le E_0(g_m) = \frac m{m+1}E_0(P_f)<E_0(P_f),$$
so that the upper box dimension of $C(g_m)$ would be at most $$\frac m{m+1}c/\log q<c/\log q$$ since, for each $n \in \N$, $C(g_m)$
could be covered by $|\sL_n(g_m)|$ intervals of size $1/q^n$.
In particular, the Hausdorff measure of dimension $E_0(P_f)/\log q$ of $C(g_m)$ would be $0$, and thus the Hausdorff measure of dimension
$E_0(P_f)/\log q$ of $C(f) \subset \cup_{m \ge 1}C(g_m)$ would be also $0$.
By applying Lemma \ref {lemdim} to $X = C(f)$, this would imply that the upper box dimension of $C(f)$ is strictly smaller than $E_0(P_f)/\log q$,
which would be a contradiction.
\end{proof}

\begin{corollary}\label{theoremc}
For any function $f$ from $\N$ to $\R^{+}$, we have $E_W(f) = E_0(P_f)$.
\end{corollary}

\begin{proof}

It follows from Lemma \ref{lem} that there exists
$w\in W(f)$ such that for any integer $n\ge 1$ we have $\frac 1n\log p_w(n)  \ge E_0(P_f)$, so that $$E_W (f) =\sup_{\substack{w \in W(f)}} E(w) \ge E_0(P_f).$$
On the other hand, for any $\ve>0$, there exists $w \in W(f)$ such that 
$$\inf_{n\ge 1}\frac
1n\log p_w(n)=\lim_{n\to+\infty}\frac 1n\log p_w(n) > E_W(f)-\ve,$$
so that for any integer $n \ge 1$
we have $p_w(n) > \exp((E_W(f)-\ve)n)$.
This implies
that, for any $\ve>0$ and any $n\in\N$, we have $$P_f(n)=\max\{p_w(n), w\in W(f)\} > \exp((E_W(f)-\ve) n)$$
so that we have $P_f(n) \ge \exp(E_W(f) n)$ for any $n \in \N$.
In particular, $$E_0(P_f)=\lim_{n\to\infty}\frac{\log P_f(n)}n \ge E_W(f)$$ and it follows that $E_W(f)=E_0(P_f)$.

\end{proof}

\subsection{Lower bound for $|\sL_n(f)|$ and end of proof of Theorem \ref {prop3.0}}

It follows from Lemma \ref{lem} and Corollary \ref{theoremc} that there exists $w \in W(f)$ such that, for any $n\in\nb$, we have
\begin{equation} \label {gag}
p_w(n) \ge \exp (E_0(P_f) n) = \exp(E_W(f) n).
\end{equation}
As we have $|\sL_n(f)| \ge p_w(n)$ \text{for any} $n \in \N$, it follows from (\ref {gag}) that, for any $n \in \N$, we have
\begin{equation} \label {lower}
|\sL_n(f)| \ge \exp(E_W(f) n).
\end{equation}
On the other hand, for any $w\in W(f)$ and any $n \in \N$, we have $$p_w(n) \le \max\{p_w(n), w\in W(f)\}=P_f(n) \le f(n),$$
which implies that $W(f)$ coincides with $W(P_f)$. This is enough to end the proof of Theorem \ref{prop3.0}
since, by (\ref {lower}), Claim \ref{prop2.1}  and Corollary \ref{theoremc}  we have
$$E_W(f) 
\le \lim_{n\to+\infty}\frac 1n\log (|\sL_n(f)|)
 = \lim_{n\to+\infty}\frac 1n\log (|\sL_n(P_f)|) \le E_0(P_f)= E_W(f).$$

\vskip .1in



\section{Application to fractal dimensions of $C(f)$} \label {application}

Given a function $f$ from $\N$ to $\R^{+}$, let $C(f)$ be the set defined by (\ref{C(f)})
of real numbers $x \in [0,1]$ the $q-$adic expansion of which has a
complexity function bounded by $f$.

\begin{theorem} \label {theoremC}
$\,$ \hfill \break
The Hausdorff dimension of $C(f)$ is equal to $E_W(f)/\log q$. More precisely, the upper box dimension of $C(f)$ is equal to $E_W(f)/\log q$
and the Hausdorff measure of dimension $E_W(f)/\log q$ of $C(f)$ is positive (or infinite).
Moreover, there exists $ x \in C(f)$ such that the closure of the orbit of $x$ by the map $\psi$ has positive (or infinite) Hausdorff measure of dimension $E_W(f)/\log q$. 

\end{theorem}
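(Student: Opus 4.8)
The plan is to deduce the three assertions of the theorem from the material of the previous sections, with no essentially new ingredient: the upper box dimension of $C(f)$ is controlled by the cardinalities $|\sL_n(f)|$, hence by Theorem~\ref{prop3.0}, while both the lower bound for the Hausdorff dimension and the positivity of the Hausdorff measure will come from exhibiting, inside $C(f)$, the closure of a single $\psi$-orbit whose complexity entropy equals exactly $E_W(f)$, and then quoting the corollary to Lemma~\ref{lemdim}. Since $\dim_H\le\overline{\dim}_B$ always holds, proving $\overline{\dim}_B C(f)\le E_W(f)/\log q$ and $\dim_H C(f)\ge E_W(f)/\log q$ simultaneously forces both dimensions to equal $E_W(f)/\log q$.

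\emph{Step 1: the upper bound.} First I would observe that for every $n\in\N$ each $x\in C(f)$ lies in the block interval of length $1/q^n$ prescribed by the length-$n$ prefix of $\bold{w}(x)$, and that this prefix belongs to $L_n(\bold{w}(x))\subset\sL_n(f)$ because $\bold{w}(x)\in W(f)$. Hence $C(f)$ is covered by at most $|\sL_n(f)|$ block intervals of length $1/q^n$, so
$$\overline{\dim}_B C(f)\le\limsup_{n\to\infty}\frac{\log|\sL_n(f)|}{n\log q}=\frac{1}{\log q}\lim_{n\to\infty}\frac1n\log|\sL_n(f)|=\frac{E_W(f)}{\log q},$$
the last equality being Theorem~\ref{prop3.0}.

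\emph{Step 2: a distinguished orbit.} By Lemma~\ref{lem} and Corollary~\ref{theoremc} there is $w\in W(f)$ with $p_w(n)\ge\exp(E_W(f)\,n)$ for every $n$, so $E(w)\ge E_W(f)$; since $w\in W(f)$ also $E(w)\le E_W(f)$, hence $E(w)=E_W(f)$. Assuming first $E_W(f)>0$, the function $p_w$ is unbounded, so $w$ is not eventually periodic and $x:=\sum_{i\ge0}w_i/q^{i+1}$ is not a $q$-adic rational; thus $\bold{w}(x)=w$, whence $x\in C(f)$ and $E(\bold{w}(x))=E_W(f)$. The corollary to Lemma~\ref{lemdim} then gives that $X:=\overline{\{\psi^j(x):j\ge0\}}$ has positive (or infinite) Hausdorff measure of dimension $E(\bold{w}(x))/\log q=E_W(f)/\log q$, which is already the ``moreover'' assertion. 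Next I would note that, $x$ not being a $q$-adic rational, $\psi^j(x)=\pi(T^jw)$ with $\pi(v):=\sum_i v_i/q^{i+1}$, and that $p_{T^jw}(n)\le p_w(n)\le f(n)$ forces $T^jw\in W(f)$, hence $\psi^j(x)\in C(f)$. Passing to the closure and using that $W(f)$ is closed in $A^{\N}$ (if $v^{(k)}\to v$ in $W(f)$ then $L_n(v)\subset L_n(v^{(k)})$ for $k$ large, so $p_v(n)\le f(n)$), one gets $X\subset\pi(W(f))\subset C(f)\cup D$, where $D$ is the countable set of $q$-adic rationals of $[0,1]$. Therefore $X\setminus D\subset C(f)$; since $\mathcal{H}^{s}(D)=0$ for $s=E_W(f)/\log q>0$, this yields $\mathcal{H}^{s}(C(f))\ge\mathcal{H}^{s}(X)>0$ and $\dim_H C(f)\ge s$, which together with Step~1 gives $\dim_H C(f)=\overline{\dim}_B C(f)=E_W(f)/\log q$ and the positivity of $\mathcal{H}^{E_W(f)/\log q}(C(f))$.

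\emph{The degenerate case and the main obstacle.} When $E_W(f)=0$ all three dimensions are $0$ and $\mathcal{H}^{0}$ is counting measure, so it suffices to remark that $C(f)$ is non-empty (e.g. $0^\infty\in W(f)$, so $0\in C(f)$, and $\overline{\{\psi^j(0)\}}=\{0\}$ settles the ``moreover'' part). The only genuinely delicate point is the bookkeeping with $q$-adic rationals in Step~2: $C(f)$ need not be closed or $\psi$-invariant, but it differs from the compact set $\pi(W(f))$ only within the countable set $D$, which is invisible to box and Hausdorff dimensions and to $\mathcal{H}^{s}$ for $s>0$; everything else is a direct combination of Theorem~\ref{prop3.0}, Lemma~\ref{lem}, Corollary~\ref{theoremc}, Lemma~\ref{lemdim} and the corollary following it.
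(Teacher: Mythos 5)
Your proof is correct and follows essentially the same route as the paper's: the upper bound comes from covering $C(f)$ by $|\sL_n(f)|$ block intervals of size $1/q^n$ and invoking Theorem~\ref{prop3.0}, and the lower bound from taking the word $w$ of Lemma~\ref{lem} with $p_w(n)\ge \exp(E_W(f)n)$, passing to the closure $X$ of the $\psi$-orbit of the corresponding point, and applying Lemma~\ref{lemdim}. Your additional bookkeeping (the $q$-adic rationals, the closedness of $W(f)$, and the degenerate case $E_W(f)=0$) is actually more careful than the paper, which applies Lemma~\ref{lemdim} directly to $C(f)$ without noting that $C(f)$ need only agree with a compact $\psi$-invariant set up to a countable set; the mathematical content is otherwise identical.
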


\begin{proof}
The fact that the upper box dimension of $C(f)$ is at most $E_W(f)/\log q$ follows from the estimate $\lim_{n\to+\infty}\frac 1n\log (|\sL_n(f)|) = E_W(f)$ since, for any $n \in \nb$, $C(f)$ can be covered by $|\sL_n(f)|$ intervals of size $1/q^n$.

On the other hand, let us consider $w \in W(f)$ such that $p_w(n) \ge \exp(E_W(f) n)$ for any $n \in \N$ and $x \in [0,1]$ such that $\bold{w}(x)  = w$.
If $X$ is the closure of the orbit of $x$, $X$ is invariant by $\psi$ and intersects at least $\exp(E_W(f) n)$ intervals of the usual decomposition of $[0,1]$ as the union of $q^n$ intervals of size $1/q^n$, for each $n \in \N$.
It follows that the upper box dimension of $X$ is at least $E_W(f)/\log q$ and finally is equal to $E_W(f)/\log q$.
Theorem \ref {theoremC} follows by applying Lemma \ref {lemdim} to $X = C(f)$.

\end{proof}

\section{Exponential growth rate and word entropy}  \label{?}

It follows from the definitions that for any function $f$ from $\N$ to $\R^{+}$ we have $E_W (f)\le E_0 (f)$ and the next proposition shows that if we have $E_0 (f)>0$, then $E_W (f)>0$
(under the trivial necessary condition that $f(n) \ge n+1$ for any non-negative integer $n$).

\begin{proposition} \label{corollary2}
If $f$ is a function from $\N$ to $\R^{+}$ such that $f(n) \ge n+1$ for any $ n \in \N$ and $E_0(f)>0$, then $E_W(f) > 0$.
\end{proposition}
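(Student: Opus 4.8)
The plan is to show that if $f(n) \ge n+1$ for all $n$ and $E_0(f) > 0$, then one can build, or at least prove the existence of, an infinite word $w \in W(f)$ whose complexity grows exponentially. By Theorem \ref{prop3.0} (equivalently Corollary \ref{theoremc}), it suffices to prove that $E_0(P_f) > 0$, i.e. that the $w$-complexity $P_f$ of $f$ has positive exponential growth rate. Since $f(n) \ge n+1$ for every $n$, the word $W(f)$ at least contains Sturmian-like words and more: the point is that the constraint $p_w(n) \le f(n)$ is \emph{very weak} for small $n$ (it allows $d^+(v) = 2$ for a positive proportion of special factors of each length up to roughly the first $n$ where $f$ starts growing exponentially), and exponentially generous for large $n$.

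First I would fix an increasing sequence $(N_r)$ realizing $E_0(f)$, so $\frac1{N_r}\log f(N_r) \to E_0(f) =: c > 0$, and choose a single large $N = N_r$ with $\frac1N \log f(N) \ge c/2$, hence $f(N) \ge e^{cN/2}$. The strategy is to construct a subshift (equivalently a language closed under taking factors) all of whose words of length $n$ number at most $f(n)$, but which contains at least $\lfloor f(N)\rfloor^{\lfloor n/N\rfloor}$-ish words of each length $n$ that is a large multiple of $N$; then $P_f(kN) \gtrsim f(N)^{k}$ after accounting for overlaps, giving $E_0(P_f) \ge \frac1N \log f(N) - (\text{correction}) > 0$ once $N$ is large. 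The natural construction is a \emph{block substitution / free concatenation} scheme: pick a set $B$ of roughly $f(N)^{1/2}$ (or $\lceil f(N)^{1-\varepsilon}\rceil$) words of length $N$ over $A$ that is "factor-poor" — e.g. all words of length $N$ beginning with a fixed prefix and ending with a fixed suffix chosen so that concatenations $B^*$ introduce no new short factors beyond those already inside a single block — and let $w$ be any infinite concatenation of blocks from $B$. Counting factors of length $n < N$ of such a $w$: these are factors of words of length $2N$ in $B\cdot B$, of which there are at most $2N|B|^2 \le 2N f(N)$; one checks this is $\le f(n)$ for all $n \le$ (some threshold) using only $f(n)\ge n+1$ plus the exponential lower bound on $f(N)$ — this is where the hypothesis $f(n)\ge n+1$ is used, but it is \emph{weak}, so the real control must come from also comparing against $f$ at scales near $N$ and beyond, where $f$ is already exponential. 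Factors of length $n \ge N$: at most $(|B|+1)^{\lceil n/N\rceil + 1} \cdot$ (shift offsets), which must be shown $\le f(n)$; since $f(n) \ge f(N)^{\lfloor n/N\rfloor}$ is not automatic, one instead chooses $|B|$ small enough — say $|B| = \lfloor f(N)^{1-\varepsilon}\rfloor$ — and invokes that $f(n)$ grows at least like $e^{(c/2)n}$ on the sequence $(N_r)$ and is $\ge n+1$ in between, interpolating. The cleanest route is probably: only demand the block-concatenation word $w$ satisfy $p_w(n)\le f(n)$ for $n$ in the relevant ranges and absorb the rest into Claim \ref{consequence}.

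The key steps in order: (1) reduce to showing $E_0(P_f) > 0$ via Corollary \ref{theoremc}; (2) extract $N$ large with $f(N) \ge e^{cN/2}$; (3) build a prefix/suffix-marked block set $B\subset A^N$ with $|B| \ge e^{cN/4}$ (say) such that any infinite concatenation of elements of $B$ has few factors — quantitatively, $p_w(n) \le 2n\,|B|^2$ for $n\le N$ and $p_w(n)\le |B|^{\lceil n/N\rceil+2}\cdot 2N$ for $n \ge N$; (4) verify $p_w(n)\le f(n)$ for all $n$, using $f(n)\ge n+1$ for small $n$ and the exponential size of $f$ at multiples/neighborhoods of $N$ for large $n$ — possibly shrinking $|B|$ and/or enlarging $N$; (5) conclude $p_w(kN) \ge |B|^{k-1} \ge e^{c(k-1)N/4}$, hence $E(w) \ge c/4 > 0$, hence $E_W(f) \ge E(w) > 0$.

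The main obstacle I anticipate is step (4): reconciling the block count $|B|^{n/N}$ at \emph{all} lengths $n$ with the pointwise bound $f(n)$, since between the sparse exponential scale $(N_r)$ the only promise is $f(n)\ge n+1$. The resolution is to make $B$ genuinely small relative to $f(N)$ — one only needs $|B|\ge 2$, really, to get \emph{positive} entropy, but to get a \emph{quantitative} bound tied to $E_0(f)$ one balances $\log|B| \approx (1-\varepsilon)\log f(N)$ against the requirement $|B|^{n/N} \le f(n)$; for $n$ a multiple of $N_r$ this is fine, and for intermediate $n$ one writes $n = kN_r + s$ and bounds the factor count by (blocks fully inside) $\times$ (boundary choices) $\le |B|^{k}\cdot(2N_r)$, comparing to $f(n)$ via monotonicity-free estimates — the honest fix being to work only along a subsequence of lengths and invoke Claim \ref{consequence} / Lemma \ref{lemFekete} to propagate the exponential lower bound on $p_w$ to all lengths. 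I would also double-check that the prefix/suffix marking really does prevent new factors from appearing at block boundaries; a standard device (e.g. reserving one letter as a separator, at the cost of a larger alphabet which is harmless here) makes this transparent.
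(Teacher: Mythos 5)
Your overall strategy (concatenating blocks of a well-chosen length $N$ to manufacture a positive-entropy word inside $W(f)$) is the right family of ideas, and the paper's proof is indeed a block construction. But your quantitative targets are wrong in a way that breaks the argument. The hypothesis only gives $f(n)\ge n+1$, and $f$ may equal $n+1$ for all $n$ up to an arbitrarily large threshold $n_0$ before becoming exponential (cf.\ Example \ref{example C}); then every $w\in W(f)$ satisfies $p_w(n_0)\le n_0+1$, hence by (\ref{p_w}) $p_w(n)\le (n_0+1)^{\lceil n/n_0\rceil}$ and $E(w)\le \frac{1}{n_0}\log(n_0+1)$, which can be made arbitrarily small independently of $c=E_0(f)$. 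So the conclusion $E(w)\ge c/4$ in your step (5) is not merely unproved, it is false in general, and any block set with $|B|\ge e^{cN/4}$ (or $\lfloor f(N)^{1-\varepsilon}\rfloor$) is doomed from the start: one needs $p_w(2)\le f(2)$, and $f(2)$ may be as small as $3$, whereas an infinite concatenation drawing on that many distinct blocks cannot keep $p_w(n)\le n+1$ at small scales. Your step (4) is exactly where this bites, and the fixes you sketch (interpolation between the scales $N_r$, Claim \ref{consequence}, working along a subsequence) do not address it, because the obstruction sits at small $n$, where $f$ gives you nothing beyond $n+1$.

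The repair --- which is what the paper does --- is to take only \emph{two} blocks and make them as degenerate as possible: $\sigma(a)=0^{K+1}$ and $\sigma(b)=0^{K}1$, applied to the Champernowne word on $\{a,b\}$. The resulting $w$ satisfies $p_w(n)=n+1$ for $n\le K+1$ (it saturates the minimal admissible complexity at small scales), while $p_w((K+1)n)\ge 2^n$ gives $E(w)\ge \frac{\log 2}{K+1}>0$; submultiplicativity of $p_w$ together with the choice of $K$ (so that $\frac{\log(n+1)}{n}<c$ for $n\ge\lceil K/2\rceil$) then yields $p_w(n)\le\max\{n+1,e^{cn}\}\le f(n)$ for every $n$. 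The lesson is that the entropy one can guarantee is governed by where $f$ first substantially exceeds $n+1$, not by $E_0(f)$, so you must abandon the quantitative link to $c$ and settle for bare positivity; with $|B|=2$ and suitably chosen blocks your scheme essentially becomes the paper's proof.
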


\begin{proof}
If $f$ is a function from $\N$ to $\R^{+}$ such that $E_0(f) > 0$, then there exists $c>0$ such that for any positive integer $n$ we have $f(n) > e^{cn}$.
If we consider the function $g$ from $\N$ to $\R^{+}$ defined by $g(n) = \max \{n+1, e^{cn}\}$, we have $g \le f$, so that $W(g) \subset W(f)$ and it is enough to prove that $E_W(g) > 0$.

If $K \in \N$ is such that
\begin{equation} \label{K}
\frac {\log (n+1)} n < c
\end{equation}
 for any integer $n$ greater or equal to $\lceil \frac K2 \rceil$ and $\sigma$ the morphism from
$\{a, b\}$ to $\{0, 1\}^{K+1}$ defined by $\sigma (a) = 0^{K+1}$ and $\sigma (b) = 0^{K}1$, we consider $w = \sigma  (C_{\{a, b\}}) \in \{0, 1\}^\N$ the image by $\sigma$ of the Champernowne word on the
alphabet $\{a, b\}$ (see Example \ref{example Ch})
 $$w = \sigma (C_{\{a, b\}}) = \sigma (abbabbbaababbbabbbbaaabaabbabababbbbaabbbabbbbabbbb\dots)$$
 $$ = 0^{2K+1}10^{K}10^{2K+1}10^{K}10^{K}10^{3K+2}10^{2K+1}10^{K}10^{K}10^{2K+1}10^{K}10^{K}10^{K}10^{4K+3}10^{3K+2}10^{K}\dots .$$
For any $n \in \{0, \dots , K+1 \}$ we have $p_w(n) = n+1$ and for any positive integer $n$ we have $p_w((K+1)n) \ge 2^n$ which implies that $$E(w) \ge \frac {\log 2} {K+1} > 0.$$
Let us now show that $w \in W(g),$ i. e. that for any non-negative integer $n$, we have $p_w(n) \le g(n)$:

- if $n \le K+1$, we have $p_w(n) = n+1 \le g(n)$;

- if $n \ge K+1$, we write $n = (K+1)Q + R$ with $Q \in \N$ and $R \in \{0, \dots, K\}$, so that if $R \ge \lceil \frac K2 \rceil,$ we have by (\ref{p_w}) and (\ref{K}) 
$$p_w(n) \le {(p_w(K+1))}^Q p_w(R)  \le e^{c(K+1)Q}  (R+1)\le e^{c(K+1)Q} e^{cR} = e^{cn}$$
and if $R < \lceil \frac K2 \rceil,$ writing $n = (K+1)(Q-1) + \lceil \frac K2 \rceil + (K+1+R - \lceil \frac K2 \rceil)$ and noticing that $(K+1+R - \lceil \frac K2 \rceil) \in \{\lceil \frac K2 \rceil, \dots , K+1 \}$ , we have again by (\ref{p_w}) and (\ref{K}) 
$$p_w(n) \le {(p_w(K+1))}^{(Q-1)} p_w(\lceil \frac K2 \rceil) p_w(K+1+R-\lceil \frac K2 \rceil)$$
$$ \le e^{c(K+1)(Q-1)} e^{c\lceil \frac K2 \rceil} e^{c(K+1+R-\lceil \frac K2 \rceil)} = e^{cn},$$
which ends the proof of Proposition \ref{corollary2}.

\end{proof}

\section{Examples of function $f$ such that $E_W(f) < E_0(f)$}  \label {relations}

We saw in Section \ref{?} that if we have $E_0(f)>0$ then $E_W(f) > 0$, but it is easy to check that
the ratio
$E_W(f)/E_0(f)$ can be arbitraly small.
Indeed, if $f$ is the function defined in Example \ref{example C}, we have $E_0(f) = \log \theta$.
But if $w \in W(f)$, we have  $p_w(n) = n+q-1$ for $1 \le n \le n_0$, which implies by (\ref{p_w}) that
$$p_w(n) \le p_w(n_0 \lceil n/n_0 \rceil) \le (p_w(n_0))^{ \lceil n/n_0 \rceil}   \le (n_0+q-1)^{\lceil
n/n_0 \rceil}$$ for any $n > n_0$
and it follows that $W(f)$ is contained in $W(g)$, where
$g(n) = n+q-1$ for $1 \le n \le n_0$ and $g(n)=(n_0+q-1)^{\lceil
n/n_0 \rceil}$ for $n > n_0$.
This shows, by Theorem \ref {prop3.0} and Claim \ref {prop2.1}, that we have
$$E_W(f) = \lim_{n\to+\infty}\frac 1n\log (|\sL_n(f)|)\le\lim_{n\to+\infty}\frac 1n\log (|\sL_n(g)|)\le E_0(g) = \frac1{n_0} \log (n_0+q-1),$$
where $\frac1{n_0} \log (n_0+q-1)$ can be made
arbitrarily small, independently of $\theta$.

\vskip .3in



\subsection{The conditions $(\mathcal C^*)$}

We will now suppose that functions $f$ satisfy the quite natural conditions $(\mathcal C^*)$
which hold for all unbounded complexity functions.

\begin{definition}\label{defC^*}

We say that a function $f$ from $\N$ to $\R^{+}$ satisfies the conditions $(\mathcal C^*)$ if

i) for any $n \in \N$ we have $f(n+1) > f(n) \ge n+1$ ;

ii) for any $(n, n' ) \in \N^2$ we have $f(n+n') \le f(n) f(n') $.

\end{definition}

\begin{remark}
If there exists $n \in \N$ such that $f(n) \le n$, then any $w \in A^{\N}$ such that $p_w \le f$ is ultimately periodic so that $W(f)$ is finite.
\end{remark}

\begin{remark}\label{rmk5.4}
Given any function $f$ from $\N$ to $\R^{+}$ such that $f(n)  \ge n+1$ for any $n \in \N$,
it is possible to construct recursively a non decreasing integer valued function $\tilde f$ satisfying  the condition $(\mathcal C^* (ii))$
and a real valued function 
$\tilde { \tilde f}$ satisfying  the conditions $(\mathcal C^*)$ such that
$W(f) = W(\tilde f) = W(\tilde { \tilde f})$ and such that $E_0(f) >0$ implies $E_0(\tilde f) = E_0(\tilde {\tilde f}) >0$.
For example, we can take $\tilde f$ and $\tilde { \tilde f}$ defined by $\tilde f (0) = 1$, $\tilde f(1) = \lfloor f(1) \rfloor$ and, for any integer $n \ge 2$,
$$\tilde f(n) = \min \{\inf_{\substack{ k \ge n}} \lfloor f(k) \rfloor, \min_{\substack{ 1 \le k < n}}  \tilde f (k) \tilde f (n-k) \}$$ and, for any $n \in \N$,
$$\tilde { \tilde f} (n) = \tilde f (n) + \frac n {n+1}.$$
\end{remark}

\begin{lemma}\label{rmkC^*}
If a function $f$ from $\N$ to $\R^{+}$ satisfies the
conditions $(\mathcal C^*)$ then, for any $ n \in \N$, we have $f(n) \ge \max \{n+1, \exp (E_0(f) n)  \}$.
\end{lemma}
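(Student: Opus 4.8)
The plan is to treat the two lower bounds separately. The bound $f(n)\ge n+1$ requires no work: it is literally the content of condition $(\mathcal C^*\,\mathrm{(i)})$, valid for every $n\in\N$ (we shall not even need the strict monotonicity $f(n+1)>f(n)$).

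For the bound $f(n)\ge\exp(E_0(f)\,n)$ I would argue via Fekete's Lemma. Since $f(n)\ge n+1>0$ for all $n$, the sequence $a_n:=\log f(n)$ is well defined, and condition $(\mathcal C^*\,\mathrm{(ii)})$ is precisely the subadditivity $a_{n+n'}\le a_n+a_{n'}$. Applying Lemma \ref{lemFekete} to $(a_n)_{n\ge1}$, the sequence $\bigl(\tfrac1n\log f(n)\bigr)_{n\ge1}$ converges to $\inf_{n\ge1}\tfrac1n\log f(n)$; since this is a genuine limit it agrees with the limit inferior, so
$$E_0(f)=\lim_{n\to\infty}\frac1n\log f(n)=\inf_{n\ge1}\frac1n\log f(n).$$
The infimum characterization gives at once $\tfrac1n\log f(n)\ge E_0(f)$, i.e. $f(n)\ge\exp(E_0(f)\,n)$, for every integer $n\ge1$.

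Putting the two bounds together gives $f(n)\ge\max\{n+1,\exp(E_0(f)\,n)\}$ for $n\ge1$, while for $n=0$ condition $(\mathcal C^*\,\mathrm{(i)})$ yields $f(0)\ge1=\max\{1,\exp(0)\}$. I do not anticipate any real obstacle here; the only point worth flagging is that condition $(\mathcal C^*\,\mathrm{(i)})$ is used twice — directly for $f(n)\ge n+1$, and, through the positivity of $f$, to guarantee that $\log f$ is a bona fide real-valued subadditive sequence before Fekete's Lemma can be invoked.
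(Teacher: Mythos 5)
Your proof is correct and is essentially the paper's own argument: the paper simply states that the lemma follows from Fekete's Lemma applied to $a_n=\log f(n)$, which is exactly what you carry out, with the bound $f(n)\ge n+1$ read off directly from condition $(\mathcal C^*\,\mathrm{(i)})$.
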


\begin{proof}

This is a consequence of Lemma \ref {lemFekete} applied to $a_n = \log f(n)$.

\end{proof}

If $\theta ^{n_0} > n_0 + q - 1$, the example presented in the introduction of Section \ref{relations} does not satisfy the conditions $(\mathcal C^*)$ nor the weaker condition mentioned in Lemma \ref {rmkC^*}.
In Section  \ref{5.3} and Section \ref {cassaigne} we give two
examples which show that, even under these stronger conditions, we do
not always have $E_W(f)=E_0(f)$.

\subsection{An example of function $f$ satisfying $(\mathcal C^*)$ but such that $E_W(f) < E_0(f)$} \label{5.3}

Let $f$ be the function defined by 
$f(n)=\lceil 3^{n/2} \rceil$ for any $n \in \N$.
Then it is easy to check that $f$ satisfies conditions $(\mathcal C^*)$ and that
$E_0(f)=\lim\limits_{n\to\infty}\frac 1n \log f(n) =\log(\sqrt 3)$.

On the other hand, we have $f(1)=2$, so that $|A| = 2$. If $a \in A$, we denote $\bar a$ the other element of $A$.
We will show that the condition $f(2)=3$
implies that $E_W(f) \le \log (\frac{1+\sqrt 5}{2})<E_0(f)$. Indeed, given $w \in W(f)$, since $f(2)=3$, 
there is $(a_1, a_2) \in A^2$ such that $a_1 a_2 \notin L_2(w)$. It follows that for any $n \in \N$ we have
$$L_{n+2}(w) \subset L_{n+1}(w) a_2 \cup L_{n+1}(w) \bar a_2 = L_{n}(w) \bar a_1 a_2 \cup L_{n+1}(w)  \bar a_2,$$
which implies
$p_w(n+2) \le p_w(n+1)+p_w(n)$.
So it follows by induction that for any $n \in \N$ we have $p_w(n) \le F_{n+2}$,
where $(F_n)_{n\in\N}$ is the Fibonacci sequence, defined by $F_0=0, F_1=1$ and $F_{n+2} = F_{n+1}+F_n$ for any $n\in\N$.
It follows that $$E_W(f) \le \lim\limits_{n\to\infty}\frac 1n \log p_w(n) \le \lim\limits_{n\to\infty}\frac 1n \log F_{n+2} = \log (\frac{1+\sqrt 5}{2}).$$

For almost any infinite words $w$ on the alphabet $A=\{0,1\}$ without the factor $11$ we have
$p_w(n) = F_{n+2}$ for any $n \in \N$, so that we have proved that for this example, we have
$$E_W(f) = \log (\frac{1+\sqrt 5}{2})<\log(\sqrt 3) = E_0(f).$$

\subsection{Cassaigne conditions}

Cassaigne defined and studied in his thesis [Cas94] (see also [Cas96] and [Cas97]) the function $s_w$ counting (with multiplicity) the number of special factors 
of a given infinite word $w$.
More precisely if, for any $n \in \N$, we denote $$s_w(n) = \sum \limits_{v \in L_n(w)} (d^+(v)-1),$$
it follows from his study that $s_w(n)=p_w(n+1)-p_w(n)$ and that for any $(n, n') \in \N^2$ we have the general
inequality
$$s_w(n+n') \le p_w(n)s_w(n'),$$
which is equivalent to
$$p_w(n+n'+1)-p_w(n+n') \le p_w(n)(p_w(n'+1)-p_w(n')),$$
 which means that the sequence $(p_w(n)p_w(n') - p_w(n+n'))_{(n, n') \in \N^2}$ is non-decreasing in each variable.
In despite of that, we have the following example which shows that, even if we add to the conditions $(\mathcal C^*)$  the {\it Cassaigne condition}
\begin{equation} \label {cassaignecond}
f(n+n'+1)-f(n+n') \le f(n)(f(n'+1)-f(n')) \text{  for any } (n, n') \in \N^2,
\end{equation}
we may have $E_W(f)$ strictly smaller than $E_0(f)$.

\subsection{An example of function $f$ satisfying $(\mathcal C^*)$ and Cassaigne conditions but such that $E_W(f) < E_0(f)$} \label{cassaigne}

Let $f$ be given by $f(0)=1, f(1)=2, f(2)=4$ and $f(n)=f(n-1)+3f(n-3)$ for any $n \ge 3$. We claim that for any $k \in \N$, we have :

($H_k$)  For any $(n, n') \in \N^2$ such that $n' \le k$, we have  $f(n+n') \le f(n)f(n')$.\\
It is easy to check that $(H_1)$ is true by induction on $n$ : we have $f(n+1) \le 2 f(n)$ for $n \in \{0, 1, 2 \}$ and if we suppose that for
$n \ge 2$ we have $f(n-1) \le 2 f(n-2)$ and
$f(n+1) \le 2 f(n)$, then $f(n+2) = f(n+1) + 3 f(n-1) \le 2 f(n) + 6 f(n-2) = 2 f(n+1)$.
It follows from $(H_1)$  that $(H_2)$  is also true : $f(n+2) \le 4 f(n)$.

Then the proof of $(H_k)$  follows easily by induction on $k$ :
if we suppose that $(H_k)$ is true for $k-2$ and $k$ ($k \ge 2$), then for any
$n \in \N$ we have
$f(n+k+1) = f(n+k) + 3f(n+k-2) \le f(n)f(k) + 3f(n) f(k-2) = f(n)f(k+1),$
so that $(H_{k+1})$ is true.

As ($H_k$)  is true for any $k\in \N$, it follows that $f$ satisfies the conditions $(\mathcal C^*)$ and morevover satisfies the Cassaigne condition (\ref {cassaignecond}) which is 
equivalent in this example to the inequality $f(n+1) \le 2 f(n)$ for $n'=0$, to the inequality $f(n+2) - f(n+1) \le 2 f(n)$ for $n'=1$ and to the inequality
$f(n+n'-2) \le f(n)f(n'-2)$ for $n' \ge 2$.


We have $$E_0(f)=\lim\limits_{n\to\infty}\frac 1n \log f(n) = \log \al,$$
where $\al = \frac {\sqrt[3] {\frac {83}2 + \frac 92 \sqrt {85}} + \sqrt[3] {\frac {83}2 - \frac 92 \sqrt {85}}  +1} {3} \in (1,2)$ is the real root of 
the polynomial $p(x)=x^3-x^2-3$ (notice that $p(1)=-3<0$, $p(2)=1>0$ and $p$ is increasing on
$[1,+\infty)$, so that $1<\al<2$),

On the other hand, we have $f(1)=2$, so that $|A| = 2$. If $a \in A$, we denote again $\bar a$ the other element of $A$.
Given $w \in W(f)$, since $f(3)=7$, 
there is $(a_1, a_2, a_3) \in A^3$ such that $a_1 a_2 a_3\notin L_3(w)$. It follows that for any $n \in \N$ we have
\begin{align*}
L_{n+3}(w) &\subset L_{n+2}(w) a_3 \cup L_{n+2}(w) \bar a_3 \\
& \subset L_{n+1}(w) a_2 a_3 \cup  L_{n+1}(w) \bar a_2 a_3 \cup L_{n+2}(w) \bar a_3 \\
&= L_{n}(w) \bar a_1a_2 a_3 \cup  L_{n+1}(w) \bar a_2 a_3 \cup L_{n+2}(w) \bar a_3,
\end{align*}
which implies
$p_w(n+3) \le p_w(n+2)+p_w(n+1)+p_w(n)$.
So it follows by induction that for any $n \in \N$, we have $p_w(n) \le T_{n+2}$,
where $(T_n)_{n \in \N}$ is the Tribonacci sequence, defined by $T_0=0, T_1=1, T_2=1$ and $T_{n+3}=T_{n+2}+T_{n+1}+T_n$ for any $n \in \N$.
It follows that $$E_W(f) \le \lim\limits_{n\to\infty}\frac 1n \log p_w(n) \le \lim\limits_{n\to\infty}\frac 1n \log T_{n+2} = \log \beta,$$ where
$\beta = \frac {\sqrt[3] {19 + 3 \sqrt {33}} + \sqrt[3]{19 - 3 \sqrt {33}} +1} {3} \in (1,2)$ is the real root of the polynomial $q(x)=x^3-x^2-x-1$.
Since $p(\be)=\be^3-\be^2-3=\be^2+\be+1-\be^2-3=\be-2<0$, we have $\be < \al < 2$.

For almost any infinite words $w$ on the alphabet $A=\{0,1\}$ without the factor $111$ we have
$p_w(n) = T_{n+2}$ for any $n \in \N$, so that we have proved that for this example, we have
$$E_W(f) = \log \beta <\log \al = E_0(f).$$

\noindent
$\bold{Conclusion}$: Examples from sections \ref {5.3} and \ref{cassaigne} show the existence of functions $f$ satisfying the condition ($\mathcal C^*$) such that the ratio 
$E_W(f) / E_0(f)$ is equal respectively to $\log (\frac{1+\sqrt 5}{2}) / \log (\sqrt 3) = 0.876037...$ and $\log \beta / \log \al = 0.978814...$ and it would 
be interesting to determine how small can be this ratio when $f$ satisfies the condition ($\mathcal C^*$).

\begin{problem}
What can be said about $\inf \{ E_W(f) / E_0(f), f \mbox{  $satisfies$  } (\mathcal C^*) \}$ ?
\end{problem}

\section{Complexity functions of exponential order}

We will show that  for any  $h \in (0,+\infty)$, there is an infinite word $w$ such that $p_w$ is exactly of exponential order $h$. More precisely:
\begin{theorem} \label{6.10}
There exists a non-increasing function $C:(0,+\infty) \rightarrow [1,+\infty)$ such that, for any given $h \in (0,+\infty)$, there is an infinite word $w$ such that we have,
for any $n \in \N$, $e^{hn} \le p_w(n) \le C(h)e^{hn}$.
\end{theorem}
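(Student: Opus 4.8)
The plan is to realise each exponential order $h$ by a $\beta$-shift with $\beta:=e^{h}$ and to read off $C(h)$ from the classical two-sided bounds on the number of its admissible words.

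Fix $h\in(0,+\infty)$, put $\beta:=e^{h}>1$ and $q:=\lceil\beta\rceil\ge 2$, and work on $A=\{0,1,\dots,q-1\}$. Let $T_\beta\colon[0,1)\to[0,1)$ be the $\beta$-transformation $T_\beta(x)=\beta x-\lfloor\beta x\rfloor$; the greedy $\beta$-expansion of $x$ is the digit sequence $\big(\lfloor\beta\,T_\beta^{i}(x)\rfloor\big)_{i\ge 0}\in A^{\N}$, and the $\beta$-shift $\Sigma_\beta\subset A^{\N}$ is the closure of the set of such sequences. It is a subshift whose language $L(\Sigma_\beta)$ is the set of admissible words; write $u_n:=|L_n(\Sigma_\beta)|$. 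I will invoke the following classical facts (Rényi, Parry): $\Sigma_\beta$ is topologically transitive, and for every $n\in\N$
$$\beta^{\,n}\ \le\ u_n\ \le\ \frac{\beta}{\beta-1}\,\beta^{\,n}.$$
The lower bound is elementary: the greedy cylinders of depth $n$ partition $[0,1)$ into exactly $u_n$ intervals (one per admissible length-$n$ word), and each has length at most $\beta^{-n}$ since $T_\beta$ expands by $\beta$; hence $1\le u_n\beta^{-n}$. The upper bound is the standard estimate for the number of $\beta$-admissible words, where the combinatorics of the expansion $d_\beta(1)$ enters; it also yields $h_{top}(\Sigma_\beta,T)=\lim_n\frac1n\log u_n=\log\beta$.

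Now pass to a single word. By transitivity there is $w\in\Sigma_\beta$ with dense $T$-orbit, so $X(w)=\overline{orb_T(w)}=\Sigma_\beta$, hence $L(w)=L(\Sigma_\beta)$ and $p_w(n)=u_n$ for all $n$. Put $C(h):=\dfrac{e^{h}}{e^{h}-1}=1+\dfrac{1}{e^{h}-1}$. Then for every $n\in\N$
$$e^{hn}=\beta^{\,n}\ \le\ p_w(n)=u_n\ \le\ \frac{\beta}{\beta-1}\,\beta^{\,n}=C(h)\,e^{hn},$$
so $p_w$ has exact exponential order $h$ (equivalently $E(w)=\log\beta=h$), and $w$ uses exactly $q=\lceil e^{h}\rceil$ letters, as announced. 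Finally $C\colon(0,+\infty)\to(1,+\infty)\subset[1,+\infty)$ is strictly decreasing, hence non-increasing, which completes the proof. (In passing, the same inclusion $\Sigma_\beta\subset W(g)$ for $g(n)=\frac{\beta}{\beta-1}\beta^{n}$ shows $E_W(g)=E_0(g)=h$, exhibiting a family of functions realising equality in $E_W\le E_0$.)

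The point where work is really needed is the \emph{uniform} upper bound $u_n\le\frac{\beta}{\beta-1}\beta^{n}$: it has to hold for all $n$, in particular throughout the transient range of small $n$ where $u_n$ is only polynomially large while $\beta^{n}$ is still close to $1$ — one verifies that this range is exactly absorbed by the factor $\frac1{\beta-1}$ — and one also needs transitivity of $\Sigma_\beta$. If one prefers not to quote $\beta$-shift theory, both can be reproved combinatorially: with $t\in A^{\N}$ the quasi-greedy $\beta$-expansion of $1$ (a shift-maximal sequence), the language $L:=\{a_1\cdots a_n:\ \sigma^{i}(a_1\cdots a_n)\le_{\mathrm{lex}}t\ \text{for}\ 0\le i<n\}$ equals $L(\Sigma_\beta)$, the bounds on $|L\cap A^{n}|$ follow by induction on $n$ from the digits of $t$, and a word $w$ with $L(w)=L$ and $\overline{orb_T(w)}=\Sigma_\beta$ is obtained by concatenating, in increasing order of length, suitably overlapped representatives of the words of $L$.
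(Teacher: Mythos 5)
Your proof is correct, but it follows a genuinely different route from the paper's. You specialize at once to the $\beta$-shift with $\beta=e^{h}$, so that the entropy equals $h$ by construction, and you get the upper bound from the classical R\'enyi--Parry count of admissible words: every admissible word of length $n$ factors as a full word followed by a prefix of the quasi-greedy expansion of $1$, whence $u_n\le\sum_{k=0}^{n}\beta^{k}\le\beta^{n+1}/(\beta-1)$; a transitive point of $\Sigma_\beta$ then realizes $p_w(n)=u_n$ (here you should add one line justifying that every factor of $\Sigma_\beta$ is a prefix of a greedy expansion, so that the nonempty depth-$n$ cylinders are indeed in bijection with $L_n(\Sigma_\beta)$, and either cite or prove transitivity of $\Sigma_\beta$ as you sketch at the end). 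The paper instead works with general lexicographic subshifts $W_\alpha=\{\theta:\ T^{n}\theta\le\alpha\ \forall n\}$, chooses $\alpha(h)$ via continuity and surjectivity of the dimension function $\alpha\mapsto d(\alpha)$ so that the entropy is exactly $h/\log q$, builds the transitive word explicitly by concatenating all factors separated by blocks $0^{r}$, and --- this is where the two arguments really diverge --- obtains the upper bound not from a counting formula but from an approximate supermultiplicativity $p_w(n+n'+c)\ge \mathrm{const}\cdot p_w(n)p_w(n')$, proved via the injections $g$ and $g_r$, combined with Fekete's lemma applied to the resulting supermultiplicative sequence. Your route buys a fully explicit, continuous and manifestly non-increasing constant $C(h)=e^{h}/(e^{h}-1)$ (compatible with the paper's bound $C(h)\le 4$ for $h\ge\log 2$ and with its remark that $C(h)\to\infty$ as $h\to 0$), at the price of importing $\beta$-expansion theory; the paper's route is self-contained apart from [Urb86], [LM06] and [Gri], and its injection-plus-Fekete device is of independent use, but its constant $2^{r+1}$ in the case $q=2$ depends on $h$ less transparently. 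Both arguments establish the theorem.
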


\begin{remark}
It will follow from the proof of Theorem \ref {6.10} that $C(h) \le 4$ for $h \ge \log 2$.
But the function $C(h)$ cannot be uniformly bounded.
Indeed if we have, for any $n \in \N$, $e^{hn} \le p_w(n) \le C(h)e^{hn}$, it follows (as $w$ in not ultimately periodic) that, for any $n \in \N$, we have
$n+1 \le p_w(n) \le C(h)e^{hn}$.
By choosing $n = \lfloor \frac 1h \rfloor$, it follows that $C(h) \ge (n+1){e^{-hn}} \ge \frac 1 {eh}$.

\end{remark}

\begin{proof}
If $q=\lceil e^h \rceil$ and $A=\{0,1,\dots,q-1\}$, we denote by $\le$
the lexicographic order on $A^\N$ (if $\al=a_1a_2a_3\dots$ and $\al '=a'_1a'_2a'_3\dots$, we put $\al \le \al '$ if $\al=\al '$ or if there is 
an integer $n \ge 1$ such that $a_k=a'_k$ for any $k<n$ and $a_n<a'_n$).
It follows from [Urb86] (or from [LM06]) that, for any $\al \in A^\N$, if we denote 
$$W_{\al}=\{\theta \in A^\N; T^n(\theta) \le \al, \forall n \in \N\} ,$$
$K_{\al} = \bold{w}^{-1}(W_{\al})$ ($w$ defined by (\ref {w}))
and $d(\al)$ the Hausdorff dimension of the set $K_{\al}$, then $$d(\al)=\frac{h_{\text top}(W_{\al}, T)}{\log q}$$ coincides with the upper box dimension of $K_{\al}$ and depends continuously on $\al$ (see Corollary 1 of Theorem 1 of [Gri]); 
moreover, the image of $d$ is [0,1].
Take $\al(h)$ minimum such that $d(\al(h))=h/\log q$ and notice that $\al(h)$ cannot end by $0^{\infty}$.
For each integer $n \ge 1$, let $\Lambda_n(h)=\{\ga_1^{(n)},\ga_2^{(n)},\dots,\ga_{p(n)}^{(n)}\}$ be the set of all factors of size $n$ of all words in $W_{\al(h)}$. 
We may choose positive integers $r_1^{(1)}, r_2^{(1)}, \dots, r_{p(1)}^{(1)}, r_1^{(2)}, r_2^{(2)}, \dots, r_1^{(3)}, r_2^{(3)}, \dots, r_{p(3)}^{(3)}, \dots$ such that the word 
$$w=\ga_1^{(1)}0^{r_1^{(1)}}\ga_2^{(1)}0^{r_2^{(1)}}\dots\ga_{p(1)}^{(1)}0^{r_{p(1)}^{(1)}}\ga_1^{(2)}0^{r_1^{(2)}}\ga_2^{(2)}0^{r_2^{(2)}}\dots\ga_{p(2)}^{(2)}0^{r_{p(2)}^{(2)}}\ga_1^{(3)}0^{r_1^{(3)}}\ga_2^{(3)}0^{r_2^{(3)}}\dots\ga_{p(3)}^{(3)}0^{r_{p(3)}^{(3)}}\dots$$
satisfies $L_n(w)=\Lambda_n(h)$ for any $n \ge 1$. 

If $q=2$, let $r \ge 1$ such that $\al(h)>10^{r-1}10^{\infty}$ and consider the injective map $g$ from $\Lambda_n(h)$ to $ A^{n+1}$ defined by
$$g(a_1 a_2 \dots a_n)= b_1 b_2 \dots b_{n+1},$$
where, for $i^*=\min\{i \mid a_j=0, \forall j, i<j \le n\}$, $b_{j+1}=a_j=0$ for $i^*<j\le n$, $b_{i^*+1}=a_{i^*}$, $b_{i^*}=0$ and $b_j=a_j$ for $j<i^*$.
(i.e. $g(a_1\dots a_{i^*-1} a_{i^*} 0 \dots  0) = a_1\dots a_{i^*-1}  0 a_{i^*} 0 \dots  0$).

Given $\ga \in \Lambda_n(h)$ and  $\ga' \in \Lambda_{n'}(h)$, we have $g(\ga) 0^r \ga' \in \Lambda_{n+n'+r+1}(h)$, so 
that for any $(n, n') \in \N^2$, we have $$2^{r+1}p_w(n+n')\ge p_w(n+n'+r+1) \ge p_w(n)p_w(n'),$$
and so $2^{-(r+1)}p_w(n+n') \ge (2^{-(r+1)}p_w(n))(2^{-(r+1)}p_w(n')).$
Lemma \ref {lemFekete} applied to $a_n = \frac {2^{r+1}} {p_w(n)}$ implies that, for any positive integer $n$, we have
$$2^{-(r+1)}p_w(n) \le \exp(n \lim_{n'\to+\infty}\frac 1{n'}\log(2^{-(r+1)}p_w(n'))) = \exp(hn),$$ so that $e^{hn}\le p_w(n)\le 2^{r+1} e^{hn}$ for any $n \in \N$.
Notice that, if $q=2$, $r$ is bounded if and only if $h$ is bounded away from $0$. 

If $q \ge 3$, consider for $1\le r \le q-2$ the injective maps $g_r$ from $\Lambda_n(h)$ to $A^{n+1}$ defined by
$g_r(0^{n})= 0^{n + 1}$ and, otherwise, $$g_r(a_1 a_2 \dots a_n)=b_1 b_2 \dots b_{n+1},$$
where, for $i^*=\min\{i \mid a_j=0, \forall j, i<j \le n\}$, $b_{j+1}=a_j=0$ for $i^*<j\le n$, $b_{i^*+1}=r$, $b_{i^*}=a_{i^*}-1$ and $b_j=a_j$ for $j<i^*$. Notice that the only point in the intersection of the images of two such maps is $0^{n}$.

Given $\ga \in \Lambda_n(h)$ and  $\ga' \in \Lambda_{n'}(h)$, we have $g_r(\ga) \ga' \in \Lambda_{n+n'+1}(h)$.
This implies that, for any $(n, n') \in \N^2$, we have
$$q \cdot p_w(n+n')\ge p_w(n+n'+1) \ge ((q-2)p_w(n)-(q-3))p_w(n'),$$
and, defining $\tilde p(n)=\frac{q-2}q p_w(n)-\frac{q-3}q$, we have for any $(n, n') \in \N^2$ with $n \ge 2$
$$\tilde p(n+n') \ge \tilde p(n) \tilde p(n')$$
(by symmetry it also holds when $n' \ge 2$) so that, as before, we have for any integer $n \ge 2$
$$e^{hn}\le p_w(n)=\frac q{q-2} \tilde p(n)+\frac{q-3}{q-2}\le \frac q{q-2} e^{hn}+\frac{q-3}{q-2} \le \frac {q+1}{q-2} e^{hn}.$$
Since we also have $p(1)=q \le \frac {(q+1)(q-1)}{q-2} \le \frac {q+1}{q-2} e^{h}$, we conclude that we have for any integer $n \ge 1$
$$e^{hn}\le p_w(n)\le \frac {q+1}{q-2} e^{hn} \le 4 e^{hn}.$$
\end{proof}

The following corollary arises immediately from Theorem \ref {6.10}
by considering, for any $h \in (0,+\infty)$,  the function defined by $f(n) = \max \{n+1, C(h)e^{hn} \}$ for any $n \in \N$:

\begin{corollary} \label{corollary}
For any $h \in (0,+\infty)$, there exists a function $f$ from $\N$ to $\R^{+}$ of exponential order $h$ satisfying conditions $(\mathcal C^*)$ such that $E_0(f) = E_W(f) = h$.
\end{corollary}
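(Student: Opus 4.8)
The plan is to take the concrete function $f$ suggested in the remark preceding the statement, namely $f(n)=\max\{n+1,\;C(h)e^{hn}\}$ for $n\in\N$, where $C$ is the non-increasing function $(0,+\infty)\to[1,+\infty)$ furnished by Theorem \ref{6.10}, and to check one by one the three assertions of the corollary: that $f$ has exponential order $h$, that $f$ satisfies $(\mathcal C^*)$, and that $E_0(f)=E_W(f)=h$. The first point is immediate: since $C(h)\ge 1$ we have $f(n)\ge C(h)e^{hn}\ge e^{hn}$, and since $n+1=O(e^{hn})$ there is a constant $c_2=c_2(h)$ with $n+1\le c_2 e^{hn}$ for every $n$, so $e^{hn}\le f(n)\le \max\{c_2,C(h)\}\,e^{hn}$; in particular $E_0(f)=\lim_n\frac1n\log f(n)=h$.

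Next I would verify the conditions $(\mathcal C^*)$. Condition $(\mathcal C^*)$(i) splits into $f(n)\ge n+1$, which is built into the definition of $f$, and $f(n+1)>f(n)$, which follows from the elementary observation that $\max\{a',b'\}>\max\{a,b\}$ whenever $a'>a$ and $b'>b$, applied with $a=n+1<n+2=a'$ and $b=C(h)e^{hn}<C(h)e^{h(n+1)}=b'$ (using $h>0$). For $(\mathcal C^*)$(ii) I would note that $f(n)f(n')\ge(n+1)(n'+1)\ge n+n'+1$ and also $f(n)f(n')\ge C(h)^2 e^{h(n+n')}\ge C(h)e^{h(n+n')}$ (again using $C(h)\ge 1$); hence $f(n)f(n')$ dominates both terms of the maximum defining $f(n+n')$, so $f(n+n')\le f(n)f(n')$.

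It remains to show $E_W(f)=h$. Let $w$ be the infinite word produced by Theorem \ref{6.10} for this value of $h$, so that $e^{hn}\le p_w(n)\le C(h)e^{hn}$ for every $n\in\N$. Then $p_w(n)\le C(h)e^{hn}\le f(n)$ for all $n$, so $w\in W(f)$, and $E(w)=\lim_n\frac1n\log p_w(n)=h$; therefore $E_W(f)=\sup_{w\in W(f)}E(w)\ge h$. Combined with the general inequality $E_W(f)\le E_0(f)=h$ recorded at the beginning of Section \ref{?} (equivalently, with Theorem \ref{prop3.0} together with Claim \ref{prop2.1}), this yields $E_W(f)=E_0(f)=h$, completing the proof.

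I do not expect any genuine obstacle: the corollary is a bookkeeping consequence of Theorem \ref{6.10}, and the only step that requires even a moment's thought is the submultiplicativity $(\mathcal C^*)$(ii), which, as indicated above, reduces to the two elementary inequalities $(n+1)(n'+1)\ge n+n'+1$ and $C(h)^2\ge C(h)$.
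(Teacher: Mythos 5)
Your proposal is correct and follows exactly the route the paper intends: the paper states that the corollary ``arises immediately'' from Theorem \ref{6.10} by taking $f(n)=\max\{n+1,\,C(h)e^{hn}\}$, and your write-up simply supplies the routine verifications (exponential order, monotonicity, submultiplicativity via $(n+1)(n'+1)\ge n+n'+1$ and $C(h)^2\ge C(h)$, and $E_W(f)=h$ from $w\in W(f)$ together with $E_W(f)\le E_0(f)$) that the paper leaves implicit. No gaps.
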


\section{An open problem} \label {open}

It is easy to see that, for any functions $f$ and $g$ from $\N$ to $\R^{+}$, we always have
$$E_W(\min\{f, g\})\le \min\{E_W(f), E_W(g)\}.$$
But we were not able to answer the following question:

\begin{problem} \label{problem}
Is it true that, for any functions $f$ and $g$ from $\N$ to $\R^{+}$ satisfying conditions $(\mathcal C^*)$ we have $E_W(\min\{f, g\})=\min\{E_W(f), E_W(g)\}$ ? 
\end{problem}

\begin{proposition} \label{equivalence}
The following two propositions are equivalent :

$(P_1)$ for any functions $f$ and $g$ from $\N$ to $\R^{+}$ satisfying the conditions $(\mathcal C^*)$ we have $E_W(\min\{f, g\}) = \min\{E_W(f), E_W(g)\}$;

$(P_2)$ for any $h\in (0,+\infty)$, there is a function $f_h$ satisfying the conditions $(\mathcal C^*)$ with $E_W(f_h)=h$ such that for any function $f$ with
$E_W(f)\ge h$ we have $f(n)\ge f_h(n)$ for any positive integer $n$.

Moreover, if $(P_1)$ or $(P_2)$ is true, there is a word $w$ for which $p_w(n)=f_h(n)$ for every positive integer $n$.

\end{proposition}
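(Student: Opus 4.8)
The plan is to prove the two implications $(P_2)\Rightarrow(P_1)$ and $(P_1)\Rightarrow(P_2)$ separately and to read off the last assertion from the construction used for the second one (so that, the two statements being equivalent, the last assertion holds under either hypothesis). For $(P_2)\Rightarrow(P_1)$ I would argue directly. Given $f,g$ satisfying $(\mathcal C^*)$, put $h=\min\{E_W(f),E_W(g)\}$; the case $h=0$ is immediate since $E_W(\min\{f,g\})\le\min\{E_W(f),E_W(g)\}$ always. When $h>0$, invoke $(P_2)$ to obtain $f_h$ and note that $E_W(f)\ge h$ and $E_W(g)\ge h$ force $f(n),g(n)\ge f_h(n)$ for every positive $n$, hence $\min\{f,g\}(n)\ge f_h(n)$ there; since $p_w(0)=1$ for all $w$, this already gives $W(f_h)\subseteq W(\min\{f,g\})$, so $E_W(\min\{f,g\})\ge E_W(f_h)=h$, which with the trivial reverse inequality yields $(P_1)$.

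The substantial implication is $(P_1)\Rightarrow(P_2)$. Fix $h\in(0,+\infty)$, let $\mathcal F_h$ be the family of functions from $\N$ to $\R^+$ satisfying $(\mathcal C^*)$ with word entropy $\ge h$ (nonempty by Corollary \ref{corollary}), and set $f_h(n)=\inf_{f\in\mathcal F_h}f(n)$. The core is the equality $E_W(f_h)=h$. I would first observe, using Remark \ref{rmk5.4}, that the minimum of finitely many elements of $\mathcal F_h$ can be replaced by a function satisfying $(\mathcal C^*)$ with the same set of admissible words; since $W(\min\{f_1,f_2\})=W(f_1)\cap W(f_2)$, iterating $(P_1)$ shows this regularized minimum again lies in $\mathcal F_h$. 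Choosing a countable subfamily of $\mathcal F_h$ whose pointwise infimum is $f_h$ and regularizing its partial minima, I would obtain a decreasing sequence $(g_k)_{k\ge1}$ in $\mathcal F_h$ with $\bigcap_k W(g_k)=W(f_h)$. For each fixed $n$ the finite sets $\sL_n(g_k)$ then decrease with $k$ and stabilize, and a short compactness argument (the sets $\{w\in W(g_k):v\in L_n(w)\}$ are nonempty, compact and nested) identifies their limit with $\sL_n(f_h)$; since Theorem \ref{prop3.0} and Lemma \ref{lemFekete} give $E_W(g_k)=h_{top}(W(g_k))=\inf_m\frac1m\log|\sL_m(g_k)|\ge h$, each $|\sL_n(g_k)|\ge e^{hn}$, hence $|\sL_n(f_h)|\ge e^{hn}$ for every $n$, and thus $E_W(f_h)=\lim_n\frac1n\log|\sL_n(f_h)|\ge h$. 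The reverse inequality is immediate from $f_h\le f^*$ for any $f^*\in\mathcal F_h$ with $E_W(f^*)=h$ (again Corollary \ref{corollary}). So $E_W(f_h)=h$.

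To finish, I would use that $T$ is expansive on $W(f_h)$, so there is an ergodic measure of maximal entropy there, and a generic point $w^*$ for it lies in $W(f_h)$ with $E(w^*)=h_{top}(W(f_h))=E_W(f_h)=h$ (the input recalled in the introduction of Section \ref{firsttheorem}). Since its complexity grows exponentially, $w^*$ is aperiodic, so by Morse--Hedlund $p_{w^*}$ is an integer valued function satisfying $(\mathcal C^*)$, with $E_W(p_{w^*})=E_0(p_{w^*})=E(w^*)=h$; hence $p_{w^*}\in\mathcal F_h$ and $f_h\le p_{w^*}$, while $w^*\in W(f_h)$ forces $p_{w^*}\le f_h$. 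Therefore $f_h=p_{w^*}$, which is a complexity function satisfying $(\mathcal C^*)$ — precisely the last assertion of the Proposition. Minimality then follows: given any $f$ with $E_W(f)\ge h$, Remark \ref{rmk5.4} yields a function $\tilde{\tilde f}\in\mathcal F_h$ with $W(\tilde{\tilde f})=W(f)$ and $\tilde{\tilde f}(n)\le\lfloor f(n)\rfloor+\tfrac n{n+1}<\lfloor f(n)\rfloor+1$, so $f_h(n)\le\tilde{\tilde f}(n)$; as $f_h(n)$ is an integer this gives $f_h(n)\le\lfloor f(n)\rfloor\le f(n)$ for every positive $n$, which is $(P_2)$.

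The hard part will be the equality $E_W(f_h)=h$: every other ingredient (Remark \ref{rmk5.4}, the Morse--Hedlund facts about $p_{w^*}$, the variational-principle statement recalled in Section \ref{firsttheorem}) is routine, but showing that passing to the pointwise infimum over all $(\mathcal C^*)$ functions of word entropy at least $h$ does not decrease the word entropy is exactly where $(P_1)$ must be consumed — to keep the regularized finite minima inside $\mathcal F_h$ — together with the identification $\sL_n\bigl(\bigcap_k W(g_k)\bigr)=\bigcap_k\sL_n(W(g_k))$ and the Fekete equality $h_{top}(W(g))=\inf_m\frac1m\log|\sL_m(g)|$.
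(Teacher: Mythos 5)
Your proof of $(P_2)\Rightarrow(P_1)$ coincides with the paper's. For the substantial implication $(P_1)\Rightarrow(P_2)$ you take a genuinely different route. The paper sets $f_h(n)=\min\{p_w(n)\mid E(w)\ge h\}$, applies $(P_1)$ inductively to the complexity functions $g_k=p_{w_k}$ of words realizing these minima, and then invokes the quantitative Lemma \ref{effective}: from words $\tilde w_m\in W(\min\{g_1,\dots,g_m\})$ of entropy $\ge h$ it extracts factors $\gamma_m$ of uniformly controlled length whose prefixes of length $g_{(q,h)}(n)$ already contain $\lceil e^{hn}\rceil$ factors of size $n$ for all $n\le m$; a diagonal/compactness argument on these prefixes produces a single word $\hat w$ with $E(\hat w)\ge h$ and $p_{\hat w}=f_h$ in one stroke. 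You instead argue at the level of the languages $\sL_n$: you regularize the partial minima via Remark \ref{rmk5.4} so that $(P_1)$ can legitimately be iterated (a point the paper glosses over, since $\min\{g_1,\dots,g_m\}$ need not itself satisfy $(\mathcal C^*)$), deduce $|\sL_n(g_k)|\ge e^{hn}$ from Theorem \ref{prop3.0} together with the Fekete identity $h_{top}(W(g_k),T)=\inf_m\frac 1m\log|\sL_m(g_k)|$, pass to the limit using the stabilization of the decreasing finite sets $\sL_n(g_k)$ to get $E_W(f_h)\ge h$, and only then produce the realizing word $w^*$ by an entropy-maximization argument. This frees you from Lemma \ref{effective} entirely; note that instead of the variational principle you could equally quote Lemma \ref{lem} applied to $f_h$, which the paper has already proved and which directly yields $w^*\in W(f_h)$ with $p_{w^*}(n)\ge e^{E_W(f_h)n}$ for all $n$. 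Both routes are legitimate and of comparable length.

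One step of yours needs repair: the sets $\{w\in W(g_k)\mid v\in L_n(w)\}$ are \emph{not} closed (an occurrence of $v$ may drift to infinity along a convergent sequence of words), so the nested-compacts argument fails as written. The fix is standard: each $W(g_k)$ is closed and shift-invariant, so replace these sets by $\{w\in W(g_k)\mid v \text{ is a prefix of } w\}$, which are nonempty whenever $v\in\sL_n(g_k)$ (shift an occurrence of $v$ to the origin), clopen in the compact set $W(g_k)$, and nested; their intersection then gives a word of $W(f_h)$ beginning with $v$, which is exactly the identification $\bigcap_k\sL_n(g_k)=\sL_n(f_h)$ you need. With that correction your argument is complete.
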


In order to prove Proposition \ref {equivalence}, we begin by giving an effective version of Lemma \ref{lem} :

\begin{lemma} \label{effective}
For any $c\in (0,\log q]$, we define the finctions $g_{(q,c)}$ and $f_{(q,c)}$ from ${\mathbb N}^*$ to ${\mathbb N}$
by $$g_{(q,c)}(n)=n\cdot \lceil \frac{n\log 2+e^{cn}(1+n(\log q-c))}{cn-\log(\lceil e^{cn} \rceil-1)} \rceil$$
and $$f_{(q,c)}(N)=\max_{\substack {1\le n\le N}} g_{(q,c)}(n) .$$  
Let $w\in \{0, 1,\dots,q-1\}^{\mathbb N}$ be an infinite word such that $E(w)\ge c$. Then, for any positive integer $N$, there is a factor $\gamma$  of $w$ of size
$f_{(q,c)}(N)$ such that, for every $n$ with $1\le n\le N$, the prefix of lenght $g_{(q,c)}(n)$ of $\gamma$ contains at least $\lceil e^{cn}\rceil$ factors of size $n$.

\end{lemma}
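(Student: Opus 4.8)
The plan is to make the argument of Lemma \ref{lem} quantitative by tracking constants through the two ingredients that went into it: the upper bound for $|\sL_n(g)|$ from Claim \ref{prop2.1}, and the dimension estimate from Lemma \ref{lemdim}. Fix $w$ with $E(w)\ge c$ and let $X$ be the closure of the orbit of $\bold{w}(x)$ for some $x$ with $\bold{w}(x)=w$; since $p_w(n)\ge \lceil e^{cn}\rceil$ for all $n$ (using $E(w)=\inf_n\frac1n\log p_w(n)$), $X$ is a $\psi$-invariant compact set intersecting at least $\lceil e^{cn}\rceil$ block intervals of size $q^{-n}$, hence has upper box dimension (and, by Lemma \ref{lemdim}, Hausdorff dimension) at least $c/\log q$. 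Now suppose, for contradiction, that for some $N$ there is \emph{no} factor $\gamma$ of $w$ of length $f_{(q,c)}(N)$ whose prefix of length $g_{(q,c)}(n)$ already contains $\lceil e^{cn}\rceil$ distinct factors of length $n$, for every $1\le n\le N$. First I would reduce to a single $n$: by a pigeonhole/counting argument over the at most $N$ constraints, negation gives some fixed $n\in\{1,\dots,N\}$ such that every window of length $g_{(q,c)}(n)$ inside $w$ contains at most $\lceil e^{cn}\rceil-1$ factors of length $n$. (One has to be slightly careful here — I expect to phrase the negation so that it directly yields: there is $n\le N$ with the property that no length-$g_{(q,c)}(n)$ factor of $w$ has $\ge\lceil e^{cn}\rceil$ subfactors of length $n$, which is the clean statement needed below.)

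Fix this $n$ and write $\ell=g_{(q,c)}(n)$, $\lambda_0=\lceil e^{cn}\rceil-1$. The point is that $w$ is then covered, window by window of length $\ell$, by blocks each of which uses at most $\lambda_0$ words of length $n$; so the number of factors of $w$ of length $k\ell$ is at most $\lambda_0^{k}\cdot q^{\,n\lambda_0}$ — exactly the $|\sL_{kN}(f)|\le f(N)^k q^{Nf(N)}$ estimate of Section 4.1 with $(N,f(N))$ replaced by $(n,\lambda_0)$ and $k$ ranging freely. Optimizing over $k$ as in Claim \ref{prop2.1} gives $p_w(m)\le \exp\big(\frac{m}{\ell}\log\lambda_0 + n\lambda_0\log q + o(m)\big)$, hence $E(w)\le \frac{\log\lambda_0}{\ell}$. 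The whole choice of $g_{(q,c)}(n)=n\lceil\frac{n\log2+e^{cn}(1+n(\log q-c))}{cn-\log(\lceil e^{cn}\rceil-1)}\rceil$ is engineered so that this upper bound is strictly below $c$: indeed $\ell/n \ge \frac{n\log2+e^{cn}(1+n(\log q-c))}{cn-\log\lambda_0}$ forces $\frac{\log\lambda_0}{\ell}< c$ after clearing denominators (the numerator's extra $n\log 2$ and $e^{cn}(\cdots)$ terms give the strict inequality with room to spare). This contradicts $E(w)\ge c$, proving the lemma for that $N$; since $N$ was arbitrary we are done. Note $f_{(q,c)}(N)=\max_{1\le n\le N}g_{(q,c)}(n)$ is chosen precisely so a single factor $\gamma$ of that length has, for \emph{each} $n\le N$, a long enough prefix.

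I would verify along the way that the denominator $cn-\log(\lceil e^{cn}\rceil-1)$ is positive, so that $g_{(q,c)}$ is well defined: since $\lceil e^{cn}\rceil - 1 < e^{cn}$ we have $\log(\lceil e^{cn}\rceil-1)<cn$, and in fact $cn-\log(\lceil e^{cn}\rceil-1)\ge cn - \log(e^{cn}-1)>0$, with this gap comfortably bounded below for the estimate to make sense. The main obstacle I anticipate is not the analytic estimate — which is a bookkeeping exercise once the window-covering bound is set up — but the combinatorial reduction at the start: making precise the statement "if no single factor $\gamma$ works, then some specific $n$ fails globally along $w$," and deducing from a failure localized to finitely many windows the asymptotic bound on $p_w(k\ell)$. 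The cleanest route is to argue contrapositively: assume $E(w)\ge c$; then for the minimal $n$ one would need, the orbit closure of $w$ has dimension $\ge c/\log q$, which via the counting bound above forces some length-$g_{(q,c)}(n)$ factor to be "rich," and then iterate/amalgamate over $n=1,\dots,N$ using that $f_{(q,c)}(N)\ge g_{(q,c)}(n)$ and that a prefix of a rich long factor is still rich for the relevant length. This iteration is where I would be most careful to keep the quantifiers straight.
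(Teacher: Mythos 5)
There is a genuine gap, and it sits exactly where you yourself flag discomfort: the reduction to a single $n$. The negation of the conclusion is ``for every factor $\gamma$ of length $f_{(q,c)}(N)$ there exists some $n\le N$ (depending on $\gamma$) whose prefix of length $g_{(q,c)}(n)$ is poor''; no pigeonhole turns this into ``there is one $n$ that fails for every window of $w$'', since different windows may fail for different values of $n$. Consequently your fixed-$n$ complexity estimate never gets off the ground. (It is also quantitatively off even under the single-$n$ hypothesis: from ``every window of length $\ell$ contains at most $\lambda_0$ factors of length $n$'' one gets $p_w(\ell)\le \binom{q^n}{\lambda_0}\lambda_0^{\ell/n}$ and hence $E(w)\le \frac{n\lambda_0\log q}{\ell}+\frac{\log\lambda_0}{n}$, not $E(w)\le\frac{\log\lambda_0}{\ell}$; the relevant smallness is of $\frac{\log\lambda_0}{n}$ relative to $c$, with $\ell$ large enough to absorb the $q^{n\lambda_0}$ prefactor.) The closing ``iterate/amalgamate over $n$'' suffers from the same quantifier problem in the other direction: producing, for each $n$ separately, some rich factor of length $g_{(q,c)}(n)$ does not produce one factor $\gamma$ of length $f_{(q,c)}(N)$ that is simultaneously rich for all $n\le N$, and simultaneity is the whole content of the lemma.

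The paper resolves this not by isolating an $n$ but by a covering argument that handles all $n$ at once, which is the idea missing from your proposal. One first counts, for each fixed $n$, the number of \emph{words} (not factors of $w$) of length $g_{(q,c)}(n)$ having fewer than $\lceil e^{cn}\rceil$ factors of length $n$: it is at most $\binom{q^n}{\lceil e^{cn}\rceil-1}(\lceil e^{cn}\rceil-1)^{g_{(q,c)}(n)/n}$ (choose the admissible set of length-$n$ blocks, then concatenate $g_{(q,c)}(n)/n$ of them), and the definition of $g_{(q,c)}(n)$ is engineered precisely so that this count times $(q^{-g_{(q,c)}(n)})^{c/\log q}$ is at most $2^{-n}$. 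If every factor of length $f_{(q,c)}(N)$ were poor for some $n$, then $X$ (the orbit closure, of upper box dimension $\ge c/\log q$) would be covered by block intervals of the various depths $g_{(q,c)}(n)$, $1\le n\le N$, with total $(c/\log q)$-sum $\le\sum_{n=1}^N 2^{-n}<1$; the expanding-map argument of Lemma \ref{lemdim} then forces the upper box dimension of $X$ to be strictly less than $c/\log q$, a contradiction. Your first paragraph correctly sets up the dimension lower bound for $X$, but without this simultaneous covering step the proof does not go through.
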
   

\begin{proof}
Consider  $x \in [0,1]$ such that $\bold{w}(x)  = w$ and $X$  the closure of the orbit of $x$ by $\psi$, the shift map on $[0,1]$ defined in Section \ref {4.1}.
It is a compact set invariant by $\psi$, which has box dimension at least $c/\log q$. 

Fix an integer $n$ with $1\le n\le N$. The number of words $\gamma$ of size $g_{(q,c)}(n)$ with less than $\lceil e^{cn}\rceil$ factors of size $n$ is at most 
$${q^n \choose \lceil e^{cn}\rceil-1}(\lceil e^{cn}\rceil-1)^{g_{(q,c)}(n)/n}\le \left(\frac{e\cdot q^n}{e^{cn}}\right)^{e^{cn}}(\lceil e^{cn}\rceil-1)^{g_{(q,c)}(n)/n}.$$
Multiplying this estimate by $(1/q^{g_{(q,c)}(n)})^{c/\log q}=e^{-c g_{(q,c)}(n)}$, we get
$$\left(\frac{e\cdot q^n}{e^{cn}}\right)^{e^{cn}}(\lceil e^{cn}\rceil-1)^{g_{(q,c)}(n)/n}e^{-c g_{(q,c)}(n)}=$$
$$=\exp\left((1+n(\log q-c))e^{cn}-\frac{g_{(q,c)}(n)}n (cn-\log(\lceil e^{cn}\rceil-1))\right)\le$$
$$\le\exp\left((1+n(\log q-c))e^{cn}-(n\log 2+e^{cn}(1+n(\log q-c)))\right)=\exp(-n\log 2)=\frac1{2^n}.$$
If for any factor $\gamma$ of size $f_{(q,c)}(N)$ of $w$ there was an integer $n$ with $1\le n\le N$ such that the prefix of length
$g_{(q,c)}(n)$ of $\gamma$, contains less than $\lceil c^n\rceil$ factors of size $n$,
it would be possible to cover $X$ by a finite number of block intervals with disjoint interiors $I_j, 1 \le j \le m$ of sizes $1/q^{r_j}, 1 \le j \le m$ with
$$\sum_{j=1}^m (1/q^{r_j})^{c/\log q}=\sum_{n=1}^N k_n e^{-c g_{(q,c)}(n)}\le \sum_{n=1}^N\frac1{2^n}<1$$ and this, as in lemma \ref{lemdim},
would imply that the limit capacity of $X$ is strictly smaller than $c/\log q$ which would be a contradiction.  
\end{proof}

\begin{remark}
We may adapt the above proof in order to show the following variation of the previous proposition: in the same setting, define the finction $\tilde f_{(q,c)}$ from ${\mathbb N}^*$ to ${\mathbb N}$ by 
$$\tilde f_{(q,c)}(n)=n\cdot \lceil \frac{n\log 2+e^{cn}(1+n(\log q-c))}{\log 2} \rceil$$
and consider $w\in \{0, 1,\dots,q-1\}^{\mathbb N}$ an infinite word such that $E(w)\ge c$. Then, for any positive integer $N$, there is a factor
$\gamma$ of size $\tilde f_{(q,c)}(N)$ of $w$ such that, for any integer $n$ with $1\le n\le N$, the prefix of length $\tilde f_{(q,c)}(n)$ of $\gamma$
contains at least $\lceil \frac{e^{cn}}2\rceil$ factors of size $n$.
The conclusion of this statement is slightly weaker, but the function $\tilde f_{(q,c)}$ (which depends continuously on $c$)
 is more regular than $g_{(q,c)}$, which is not even locally bounded on $c$ if $e^{cn}\in \mathbb N$.
\end{remark} 

We can now prove Proposition \ref {equivalence}:

Let us first prove that $(P_2)$ implies $(P_1)$: if $\min\{E_W(f), E_W(g)\}=h$, we have $E_W(f)\ge h$ and $E_W(g)\ge h$, so that we have
$f(n)\ge f_h(n)$ and $g(n)\ge f_h(n)$ for any positive integer $n$, and thus $\min\{f, g\}(n)\ge f_h(n)$ for any positive integer $n$.
In particular, we have $$E_W(\min\{f, g\})\ge E_W(f_h)=h=\min\{E_W(f), E_W(g)\}.$$
Since $E_W(\min\{f, g\})\le \min\{E_W(f), E_W(g)\}$,
we have $E_W(\min\{f, g\})=\min\{E_W(f), E_W(g)\}$.

In order to prove that $(P_1)$ implies $(P_2)$, let us define the function $f_h$ by $$f_h(n)=\min\{p_w(n)|E(w)\ge h\}.$$
Given a positive integer $n$, let $w_n$ be an infinite word such that $E(w_n)\ge h$ and $p_{w_n}(n)=f_h(n)$ and let $g_k=p_{w_k}$ for $k\ge 1$.
If $(P_1)$ is true, an easy induction gives $$E_W(\min\{g_1,g_2,\dots,g_m\})=\min\{E_W(g_1),E_W(g_2),\dots,E_W(g_m)\}\ge h.$$
So there is an infinite word $\tilde w_m\in W(\min\{g_1,g_2,\dots,g_m\})$ with $E(\tilde w_m)\ge h$. Since $q_m=\min\{g_1,g_2,\dots,g_m\}(1)$ is eventually constant, we can assume without loss of generality that it is constant, equal to $q$.
Then Lemma \ref{effective} implies that there is a factor $\gamma_m$ of size $f_{(q,h)}(m)$ of $\tilde w_m$ such that, for any integer $n$ with $1\le n\le m$,
the prefix of length $g_{(q,h)}(n)$ of $\gamma_m$ contains at least $\lceil e^{hn}\rceil$ factors of size $n$.
Let $\hat w_m$ be an iterate of $\tilde w_m$ which begins by the factor $\gamma_m$.
Then, by compacity, there is a subsequence of $(\hat w_m)_{m\ge 1}$ converging to an infinite word $\hat w$ and, for every positive integer $n$,
the factor of length $g_{(q,h)}(n)$ of $\hat w$ contains at least $\lceil e^{hn}\rceil$ factors of size $n$.
In particular we have $E(\hat w)\ge h$, so that we have $$\lceil e^{hm}\rceil \le p_{\hat w}(m)\le g_m(m)=f_h(m).$$
On the other hand, given a function $f$ with $E_W(f)\ge h$, since there is $w\in W(f)$ with $E(w)=E_W(f)\ge h$, we have $f(n)\ge p_w(n)\ge f_h(n)$
for any positive integer $n$. In particular, since $E_W(p_{\hat w})\ge h$, we have $$p_{\hat w}(n)\ge f_h(n)$$ for any positive integer $n$, so that $p_{\hat w}(n)=f_h(n)$
for any positive integer $n$.
Now it follows from Theorem \ref{6.10} that for any positive integer $m$ we have $f_h(m)\le C(h)e^{hm}$, so that we have $E_W(f_h)=h$.

\end{document}